\newif\iffurther
\newtheorem{thm}{Theorem}[section] 
\newtheorem{cor}[thm]{Corollary}
\newtheorem{defn}[thm]{Definition}
\newtheorem{exmpl}[thm]{Example}
\newtheorem{lem}[thm]{Lemma}
\newtheorem{prop}[thm]{Proposition}
\newtheorem{rem}[thm]{Remark}
\newtheorem{ques}[thm]{Question}
\newcommand\software[1]{{\texttt{#1}}} 
\def\[{\left[}
\def\]{\right]}
\def\almostprime{union-prime}
\def\GK{{\operatorname{GKdim}}}
\def\GKdim{{Gel'fand-Kirillov dimension}}
\newcommand\sg[1]{{\left<#1\right>}}
\newcommand\ideal[1]{{\left<#1\right>}}
\long\def\forget#1\forgotten{{}}
\newcommand\suchthat{{\,:\ }}
\newcommand\M[1][n]{{\operatorname{M}_{#1}}}
\def\sub{\subseteq}
\def\nsub{\,\not\subseteq\,}
\newcommand\isom{{\,\cong\,}}
\def\ra{{\rightarrow}}
\newcommand\set[1]{{\{#1\}}}
\def\N{{\mathbb{N}}}
\def\co{{\,:\,}}
\newcommand\smat[4]{{\left(\begin{array}{cc} {#1} & {#2} \\ {#3} & {#4} \end{array}\right)}}
\newcommand\End{{\operatorname{End}}}
\newcommand\Rref[1]{{Remark~\ref{#1}}}
\newcommand\Eref[1]{{Example~\ref{#1}}}
\newcommand\Pref[1]{{Proposition~\ref{#1}}}
\newcommand\Cref[1]{{Corollary~\ref{#1}}}
\newcommand\Lref[1]{{Lemma~\ref{#1}}}
\newcommand\Sref[1]{{Section~\ref{#1}}}
\newcommand\eq[1]{{(\ref{#1})}}
\def\normali{{\lhd}} 
\def\normal{{\unlhd}} 
\def\lam{{\lambda}}
\def\Z{{\mathbb{Z}}}
\def\COND{{(\footnotesize{\diamondsuit})}}
\def\PIdeg{{\operatorname{PI}}}
\def\PPind{{\operatorname{\PP}}}
\newcommand\mul[1]{{{#1}^{\times}}}
\newcommand\tensor[1][]{{\otimes_{#1}}}
\renewcommand\span{\operatorname{span}}
\DeclareMathOperator{\spec}{spec}
\def\({\left(}
\def\){\right)}
\title{Unions of chains of primes
\iffurther{\\ -- NOT FOR DISTRIBUTION --}\fi
}
\author{Be'eri Greenfeld}
\author{Louis H.~Rowen}
\author{Uzi Vishne}
\address{Department of Mathematics, Bar Ilan University, Ramat Gan 5290002, Israel}
\email{beeri.greenfeld@gmail.com, \{rowen,vishne\}@math.biu.ac.il}
\thanks{This research was partially supported by a BSF grant no. 206940}
\thanks {We thank L. Small for many helpful suggestions, including
pointing out \cite[Exmpl.~4.2]{P}.}
\date{\today}
\begin{document}

\begin{abstract}
The union of an ascending chain of prime ideals is not always prime.
The union of an ascending chain of semi-prime ideals is not always semi-prime.
We show that these two properties are independent.
We also show that the number of non-prime unions of subchains in a chain of primes in a PI-algebra
does not exceed the PI-class minus one, and this bound is tight.
\iffurther {\bf{This version has the `Further Ideas' section.}}\fi
\end{abstract}

\maketitle

\newcommand\union[2]{{\mbox{$\bigcup{#1}={#2}$}}}


\def\parl{{}}
\def\parr{{}}
\def\PP{{\mbox{\parl$\mathcal P^{\uparrow}$\parr}}}
\def\PSP{{\mbox{\parl$\mathcal P$\parr}}}
\def\SPSP{{\mbox{\parl$\mathcal S\!P^{\uparrow}$\parr}}}
\def\Zent{{\operatorname{Cent}}}

\section{Introduction}\label{sec:intro}

In a commutative ring, the union of a chain of prime ideals is
prime, and the union of a chain of semiprime ideals is semiprime.
This paper demonstrates and measures the failure of these chain
conditions in general.


\begin{defn} A ring has the {\bf{(semi)prime chain property}} (denoted
\PP\ and \SPSP, respectively) if the union of any countable chain
of (semi)prime ideals is always (semi)prime.\footnote{For
simplicity we deal only with countable chains throughout the
paper, but the arguments are general.}
\end{defn}

The property \SPSP\ was recognized by Fisher and Snider \cite{FS} as the
missing hypothesis for Kaplansky's conjecture on regular rings,
and they gave an example of a ring without \SPSP.

Our focus is on \PP. The class of rings satisfying \PP\ is quite large. An easy exercise shows that every commutative ring satisfies \PP, and the same
argument yields that the union of strongly prime ideals is strongly prime ($P \normali R$ is strongly prime if $R/P$ is a domain).
In fact, we have the following result:

\begin{prop}\label{first}
Every ring $R$ which is a finite module over a central subring, satisfies \PP.
\end{prop}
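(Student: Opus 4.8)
The plan is to verify directly that $P:=\bigcup_i P_i$ satisfies the elementwise criterion for a prime ideal. Recall that a proper ideal $P$ of a ring is prime exactly when, for all elements $a,b$, the inclusion $aRb\subseteq P$ forces $a\in P$ or $b\in P$. First I would dispatch the routine points: $P$ is an ideal, being an ascending union of ideals, and $P$ is proper because $1\notin P_i$ for every $i$ (each $P_i$ being a proper ideal), hence $1\notin P$.

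The one real use of the hypothesis is the observation that \emph{each set $aRb$ is a finitely generated module over the central subring}. Write $R=\sum_{k=1}^{m}Ce_k$ with $C$ central. Then for $r=\sum_k c_ke_k\in R$ we have $arb=\sum_k c_k\,(ae_kb)$, since each $c_k$ commutes with $a$; thus $aRb=\sum_{k=1}^{m}C\,(ae_kb)$ is generated as a $C$-module by the $m$ elements $ae_1b,\dots,ae_mb$. This is exactly the step that fails for a general PI-algebra, where $aRb$ need not be finitely generated over any central subring, and it is the only point at which module-finiteness enters.

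Now suppose $aRb\subseteq P$. Each of the finitely many generators $ae_kb$ lies in $P=\bigcup_i P_i$, so $ae_kb\in P_{i_k}$ for some index $i_k$; putting $N=\max_k i_k$ and using that the chain ascends, all of $ae_1b,\dots,ae_mb$ lie in the single prime ideal $P_N$, and hence $aRb=\sum_k C\,(ae_kb)\subseteq P_N$. As $P_N$ is prime, $a\in P_N\subseteq P$ or $b\in P_N\subseteq P$, which is what we wanted; therefore $P$ is prime and $R$ satisfies $\PP$. I do not expect a genuine obstacle: the only things needing care are the standard equivalence between the ideal-theoretic and elementwise descriptions of primeness and the elementary remark that a finite subset of an ascending union of ideals already sits inside one term of the chain. (The same argument gives a little more: $\PP$ holds as soon as $aRb$ is a finitely generated module over some central subring, for every $a,b\in R$.)
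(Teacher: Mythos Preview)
Your proof is correct and follows essentially the same approach as the paper's own proof: write $R=\sum_k Ce_k$ over a central subring $C$, observe that $aRb=\sum_k C(ae_kb)$, and use that the finitely many generators $ae_kb$ must all lie in a single $P_N$ from the chain, whence primeness of $P_N$ gives $a\in P_N$ or $b\in P_N$. The paper's version is terser (omitting the routine checks that $P$ is a proper ideal), but the substance is identical.
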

\begin{proof}
Write $R = \sum_{i=1}^t Cr_i$ where $C \sub
\operatorname{Cent}(R)$. Suppose $P_1 \subset P_2 \subset \cdots$ is
a chain of prime ideals, with $P = \cup P_i$. If $a,b \in R$ with
$$\sum C ar_i b = \sum aCr_i b = aRb \subseteq P,$$ then there is
$n$ such that $ar_i b \in P_n$ for $1 \le i \le t,$ implying $aRb =
\sum Car_i b \subseteq P_n$, and thus $a \in P_n$ or $b \in P_n$.
\end{proof}

(For a recent treatment of the correspondence of infinite chains of primes between a ring $R$ and a central subring, see \cite{Shai}).

The class of rings satisfying \PP\ also contains every ring that satisfies ACC (ascending chain condition) on primes, and is closed under homomorphic images and
central localizations.
This led some mathematicians to believe that it holds in general.
%
On the other hand, Bergman produced an example lacking \PP\ (see
\Eref{Ex1} below), implying that the free algebra on two generators does not have~\PP.

Obviously, the property \PP\ follows from the maximum property on families of primes. On the other hand, \PP\ implies (by Zorn's lemma) the following maximum property: for every prime $Q$ contained in any ideal $I$, there is a prime $P$ maximal with respect to $Q \sub P \sub I$.

In \Sref{sec:mat} we show that \PP\ and \SPSP\ are independent, by presenting an example (due to Kaplansky and Lanski) of a ring satisfying \PP\ and not \SPSP, and an example of a ring satisfying \SPSP\ but not \PP.

We say that an ideal is {\bf{\almostprime}} if it is a union of a
chain of primes, but is not itself prime. (If $\set{P_\lam}$ is an
ascending chain of primes, then $R/\bigcup P_{\lambda} =
\lim_{\rightarrow} R/P_{\lam}$ is a direct limit of prime rings).
The {\bf{$\PP$-index}} of the ring $R$ is the maximal number of
non-prime unions of subchains of a chain of prime ideals in $R$ (or
infinity if the number is unbounded, see \Pref{PPindex}).
\Sref{sec:mon} extends Bergman's example by showing that the
\PP-index of the free (countable) algebra is infinity. A variation of this construction, based on free products, is presented in \Sref{sec:example2.2}. After defining the \PP-index in \Sref{sec:PP}, in
\Sref{sec:PI} we discuss PI-rings, showing that the \PP-index does
not exceed the PI-class minus one, and this bound is tight. We thank the anonymous referee for careful comments on a previous version of this paper.

\section{Monomial algebras}\label{sec:mon}

Fix a field $F$. We show that \PP\ and \SPSP\ fail in the free algebra (over $F$) by
constructing an (ascending) chain of primitive ideals whose union
is not semiprime. Let us start with a simpler theme, whose
variations have extra properties.

\begin{exmpl}[A chain of prime ideals with non-semiprime union]
\label{Ex1} Let $R$ be the free algebra in the (noncommuting)
variables $x,y$. For each $n$, let
$$P_n = \ideal{xx,xyx, xy^2x, \dots, xy^{n-1}x}.$$ 
As a monomial ideal, it is enough to check primality on monomials.
If $uRu' \sub P_n$ for some words $u,u'$, then in particular
$uy^{n}u' \in P_n$, which forces a subword of the form $xy^ix$
(with $i<n$) in $u$ or in $u'$; hence either $u\in P_n$ or $u'\in
P_n$.

On the other hand $\bigcup P_n = (RxR)^2$ which is not semiprime.
\end{exmpl}

This example, due to G.~Bergman, appears in \cite[Exmpl.~4.2]{P}.
Interestingly, primeness is always maintained in the following
sense (\cite[Lem.~4.1]{P}, also due to Bergman): for every
countable chain of primes $P_1 \subset P_2 \subset \cdots$ in a
ring $R$, the union $\bigcup (P_n[[\zeta]])$
is a prime
ideal of the power series ring $R[[\zeta]]$.

Since in
\Eref{Ex1} $\bigcup P_n = (RxR)^2$, if $Q \normali R$ is a prime
containing the union then $x \in Q$ so $R/Q$ is commutative.
In particular, a chain of prime ideals starting from the chain $P_1
\subset P_2 \subset \cdots$ has only one \almostprime. Let us
exhibit a (countable) chain providing infinitely many
\almostprime{}s.

\begin{exmpl}[A prime chain with infinitely many \almostprime{}s]
Let $R$ be the free algebra generated by $x,y,z$. For a monomial $w$ we denote by $\deg_yw$ the degree of $w$ with respect to $y$.
For $i,n \geq 1$, consider the monomial ideals
$$I_{i,n} = RxxR+RxzxR+\cdots+Rxz^{i-1}xR+\sum_{\deg_y w < n} R xz^ixwxz^ix R,$$
which form an ascending chain with respect to the lexicographic
order on the indices $(i,n)$, since $xz^ix \in I_{i',n}$ for every
$i'>i$. To show that $I_{i,n}$ are prime, suppose that $u,u'$ are
monomials such that $u,u' \not \in I_{i,n}$ but $u R u' \sub
I_{i,n}$. Then $u z^i y^n z^i u' \in I_{i,n}$. Since none of the
monomials $xz^{i'}x$ ($i'<i$) is a subword of $u$ or $u'$, they are
not subwords of $u z^i y^n z^i u'$, forcing $u z^i y^n z^i u'$ to
have a subword of the form $xz^ixwxz^ix$ where $\deg_yw < n$. It
follows that $z^iy^nz^i$ is a subword of $z^ixwxz^i$, contrary to
the degree assumption. Now, for every $i$, $$\bigcup_{n} I_{i,n} =
RxxR+RxzxR+\cdots+Rxz^{i-1}xR+(Rxz^ixR)^2,$$ which contains
$(Rxz^ixR)^2$ but not $Rxz^ixR$, so it is not semiprime.
\end{exmpl}

In particular the \PP-index of $R$ (see \Pref{PPindex}) is infinity.
In Section~\ref{sec:PI} we show that this phenomenon is impossible
in PI algebras: there, the number of \almostprime{}s in a prime
chain is bounded by the PI-class.

\begin{rem}
The ideals $P_n$ in \Eref{Ex1} are in fact primitive. Indeed, Bell and Colak \cite{Bell} proved that any finitely presented prime monomial algebra is either primitive or PI (also see \cite{Ok}), and $R/P_n$ contains a free subalgebra, e.g. $k\ideal{xy^n,xy^{n+1},\dots}$.
\end{rem}
The same effect can be achieved by using idempotents.

\forget
Let $R$ be a binomial algebra, namely a quotient of a free algebra with respect to relations of the form $w_1 = w_2$ (or $w_1 = 0$) where $w_1,w_2$ are monomials. We say that an ideal $I$ in $R$ is monomially prime if $uRu' \sub I$ forces $u \in I$ or $u' \in I$ for every two monomials $u,u'$.
\begin{lem}
Let $R$ be the free algebra generated by $e,y$ subject to the relation $e^2 = e$. Then every monomial ideal which is monomially prime is prime.
\end{lem}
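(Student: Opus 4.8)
The plan is to use the $F$-basis of $R$ consisting of the \emph{reduced words} in $e,y$ --- words in the alphabet $\{e,y\}$ having no factor $ee$ --- so that $R=F[\mathcal M]$ is the monoid algebra of $\mathcal M=\sg{e,y\mid e^2=e}$ (concatenate, then delete any $ee$). A monomial ideal of $R$ is then $\span_F(\mathcal I)$ for a two-sided monoid ideal $\mathcal I\sub\mathcal M$, equivalently a set of reduced words closed upward under ``is a factor of''; and ``monomially prime'' says precisely that $\mathcal I$ is a \emph{prime} ideal of $\mathcal M$, i.e.\ $s\mathcal M t\sub\mathcal I\Rightarrow s\in\mathcal I$ or $t\in\mathcal I$. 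I must deduce that $\span_F(\mathcal I)$ is a prime ideal of the algebra.

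I would model the argument on the classical one for monomial ideals of a free algebra. Suppose $aRb\sub I$ with $a,b\notin I$; after subtracting off the parts of $a$ and $b$ supported on $\mathcal I$ (which lie in $I$) I may assume every monomial of $a=\sum_i\alpha_iu_i$ and of $b=\sum_j\beta_jv_j$ lies outside $\mathcal I$, and $a,b\neq0$. In the free case one picks $u_*$ (resp.\ $v_*$) of maximal length among the $u_i$ (resp.\ $v_j$), uses monomial primeness to find a word $w$ with $u_*wv_*\notin\mathcal I$, and notes that additivity of length makes $u_*wv_*$ occur in $awb=\sum_{i,j}\alpha_i\beta_j\,u_iwv_j$ with coefficient $\alpha_*\beta_*\neq0$; so $awb\notin I$, contradicting $aRb\sub I$. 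The obstruction to copying this is that $e^2=e$ spoils additivity of length: two reduced words merge at a junction exactly when both carry an $e$ there.

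To repair this I would order reduced words first by the number of occurrences of $y$ --- which \emph{is} additive in $\mathcal M$ --- and then by length, and take $u_*,v_*$ maximal for this order. If the connecting word $w$ can be chosen to begin and end with the letter $y$, then no junction ever merges: each product $u_iwv_j$ is the plain concatenation, hence already reduced, and additivity of both the length and the number of $y$'s together with the maximality of $u_*,v_*$ forces $|u_i|=|u_*|$ and $|v_j|=|v_*|$; comparing the prefix of length $|u_*|$ and the suffix of length $|v_*|$ of the single word $u_iwv_j=u_*wv_*$ then forces $u_i=u_*$ and $v_j=v_*$. Thus no cancellation occurs, the coefficient of $u_*wv_*$ in $awb$ is $\alpha_*\beta_*\neq0$, and the contradiction goes through.

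The crux is the step I have postponed: given $u_*,v_*\notin\mathcal I$, produce such a $w$ beginning and ending with $y$. Monomial primeness only gives some $w_0$ with $u_*w_0v_*\notin\mathcal I$, and I would try to rewrite the triple: if $w_0$ begins with $e$, then either that $e$ is absorbed by a trailing $e$ of $u_*$ (just delete it from $w_0$), or $u_*e\notin\mathcal I$ --- otherwise $u_*w_0v_*$, which has $u_*e$ as a factor, would lie in $\mathcal I$ --- so replace $u_*$ by $u_*e$ and strip the leading $e$ from $w_0$; symmetrically on the right. Each step shortens $w_0$, so it ends with $w_0$ empty or beginning and ending with $y$. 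This is where I expect the genuine difficulty: the rewriting may turn $u_*$ into $u_*e$, which need no longer be a monomial of $a$, so the maximality used in the no-cancellation step is lost for the pair actually obtained, and reconciling the two choices (and disposing of the case $w_0=1$) is the delicate part. One should also sanity-check the assertion itself, since in the unital free algebra $RyR$ is monomially prime --- for monomials $u,u'\notin RyR$ one has $u,u'\in\{1,e\}$ and $uRu'\ni e\notin RyR$, so the hypothesis is never met --- while $R/RyR\isom F\times F$ is not prime; so some extra hypothesis, or working in the non-unital free algebra (where $R/RyR\isom F$), appears to be needed for the statement as written.
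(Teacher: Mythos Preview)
Your closing sanity check is decisive: the lemma as stated is false, and your counterexample is correct. The ideal $RyR$ is monomial; if $u,u'$ are reduced words outside $RyR$ then $u,u'\in\{1,e\}$ and $ueu'=e\notin RyR$, so $RyR$ is monomially prime; yet $R/RyR\cong F[e]/(e^2-e)\cong F\times F$ is not prime. The obstruction you isolate in your sketch --- that the rewriting $u_*\mapsto u_*e$ destroys the maximality needed for the no-cancellation step --- is therefore not a repairable technicality but a symptom of the statement being wrong.

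The paper does not actually prove this lemma. It sits inside a suppressed block, and its ``proof'' consists of a single setup sentence (``Let $I$ be a monomial ideal which is monomially prime. Let $f,g\in R$ be any two elements such that $fRg\subseteq I$.'') and then stops; the authors evidently abandoned it. The technique they had in mind, visible in \Eref{Ex3} and in the unpublished ``Further ideas'' notes, is a leading-monomial argument: order reduced words by length then lexicographically and use $\overline{fwg}=\bar f\,w\,\bar g$, then induct on the number of monomials. That identity holds whenever $w$ begins and ends with $y$ --- which is why \Eref{Ex3} succeeds, since for the specific ideals $P_n$ there it suffices to test $w=y^n$ --- but fails in general (take $f=ye+ey$, $w=e$, $g=1$: then $\overline{fwg}=eye$ while $\bar f\,w\,\bar g=ye$). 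This is exactly your obstruction viewed from a different angle, and the counterexample $RyR$ confirms it cannot be circumvented.
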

\begin{proof}
Let $I$ be a monomial ideal which is monomially prime. Let $f,g \in R$ be any two elements such that $fRg \sub I$.
\end{proof}
\forgotten






\begin{exmpl}[A chain of primitive ideals with non-semiprime union] \label{Ex3}
Let $R$ be the free algebra in the variables $e,y$, modulo the
relation $e^2 = e$. Every monomial has a unique shortest
presentation as a word (replacing $e^2$ by $e$ throughout). Ordering
monomials first by length and then lexicographically, every element
$f$ has an upper monomial $\bar{f}$. Notice that $\overline{f y^n g}
= \bar{f}y^n \bar{g}$.

For each $n$, let
$$P_n = \ideal{eye, ey^2e, \dots, ey^{n-1}e}.$$
To show that  $P_n$ is a prime ideal, assume that $f y^n g \in
P_n$. Then $\bar{f}y^n \bar{g} = \overline{f y^n g} \in P_n$,
forcing $\bar{f} \in P_n$ or $\bar{g} \in P_n$ as in \Eref{Ex1}.
The claim follows by induction on the number of monomials.

To show that the ideal $P_n$ is primitive, it is enough by
\cite{LRS} to prove that $e(R/P_n)e$ is a primitive ring.
We construct an isomorphism between $e(R/P_n)e$ and the countably generated free algebra $F\ideal{z_0,z_1,\dots}$ by sending $ey^me$ for $m \geq n$ (which clearly generate a free algebra) to $z_{m-n}$. But the free algebra is primitive (see 
\cite[Prop.~11.23]{Lam}
).

On the other hand $\bigcup P_n = ReyReR = ReRyeR$, which contains $(ReyR)^2$ but not $ey$, so is not semiprime.
\end{exmpl}

\begin{rem}
We say that a ring is {\bf{uniquely-\PP}} if it has a unique minimal
prime over every chain of prime ideals. Since the intersection of a
descending chain of primes is prime, Zorn's lemma shows that there
are minimal primes over every ideal, in particular over every
\almostprime.

In the topology of the spectrum, a net $\set{P_{\lambda}}_{\lambda
\in \Lambda}$ of primes converges to a prime $Q$ if and only if
$\bigcap_{\lambda \in \Lambda}\bigcup_{\lambda' \geq \lambda}
P_{\lambda'} \subseteq Q$; in particular when $\set{P_{\lambda}}$ is
an ascending chain, $\lim P_\lambda = Q$ if and only if $\bigcup
P_\lambda \sub Q$. Therefore, the spectrum can identify minimal
primes over \almostprime{}s. It seems that the spectrum cannot
distinguish \PP\ from uniquely-\PP.

In the examples of this section, there is a unique minimal prime
over every \almostprime. In \Eref{Ex4} below the situation is
different: the \almostprime\ ideal constructed there is the
intersection of two primes containing it.
\end{rem}

\section{Prime ideals in free products}\label{sec:example2.2} 


In \Eref{Ex1} there are infinitely many (incomparable) prime ideals
lying over the chain. We modify this example, in order to obtain a
chain over which there is unique prime. In \Eref{Ex1} we considered
ideals of the free algebra, which can be written as a free product
$F[x] *_F F[y]$. The quotient over the radical of the union over the
chain is the ``second'' component $F[y]$, which we would like to
replace by the field $F(y)$. The proof that the ideals are prime is
somewhat delicate; we thank the referee for pointing this out.


\forget 
\begin{exmpl}[A chain of prime ideals whose union is not semiprime, although its radical
is a maximal ideal]\label{Ex1div}
Let $D$ be the quotient division ring of the free algebra $F\ideal{x,y}$.
Let $R$ be the subalgebra generated by $x$ and the subfield $F(y)$. Extend $\deg_y \co F[y] \ra \N$ to $\deg_y \co \mul{F(y)} \ra \Z$ in the obvious manner.
Similarly to the previous example, take the prime ideals
$$P_n = \ideal{xax \suchthat a \in F(y), \deg_y(a) < n}.$$ 
Again $P = \bigcup P_n = (RxR)^2$, which is not semiprime. But now $R/\sqrt P = R/\ideal{x} \cong F(y)$.
\end{exmpl}
\forgotten

Let $F$ be a field and let $A,B$ be $F$-algebras, with given vector
space decompositions $A = F \oplus A_0$ and $B = F \oplus B_0$. The
free product $A *_F B$ can be viewed as the tensor algebra $T(A_0
\oplus B_0) = F \oplus \bigoplus_{n\geq 1} (A_0\oplus B_0)^{\tensor n}$, modulo the relations $a \tensor a' = aa'$ and $b
\tensor b' = bb'$ for every $a,a' \in A$ and $b,b' \in B$.
We will omit the tensor symbol.

Fixing the decomposition $F[x] = F \oplus xF[x]$ and an arbitrary decomposition $B = F \oplus B_0$, we consider ideals of the free product $R = F[x] *_F B$. The tensor algebra is graded by $x$, once we declare that $\deg(b) = 0$ for every $b \in B_0$, and this grading induces a grading on $R$.

Let $W \sub B$ be a vector space containing $F$.
We say that $W$ is {\bf{restricted}} if for every finite dimensional subspace $V \sub B$ there is an element $b \in B$ such that $Vb \sub B_0$ and $Vb \not \sub W$; and an element $b' \in B$ such that $b'V \sub B_0$ and $b'V \not \sub W$.


\begin{thm}\label{main}
The ideal $P = RxWxR$ of $R$ is prime whenever $W \sub B$ is a restricted subspace.
\end{thm}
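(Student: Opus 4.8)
The plan is to analyze the quotient $R/P$ explicitly and then verify directly that it is a prime ring; throughout, tensor products are over $F$, and bars denote images under the relevant quotient map. First, $x$ has degree $1$ while every $b\in B_0$ has degree $0$, so each generator $xwx$ ($w\in W$) of $P$ is homogeneous of degree $2$ for the $x$-grading on $R$; hence $P$ is a graded ideal and $R/P$ is graded. By the standard fact that a graded ideal $I$ of a graded ring is prime as soon as $aRb\sub I$ forces $a\in I$ or $b\in I$ for all \emph{homogeneous} $a,b$, it suffices to fix homogeneous $f\in R_p$ and $g\in R_q$ with $fRg\sub P$ and to prove that $f\in P$ or $g\in P$.

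The heart of the argument is an explicit model for $R/P$. Write $W=F\oplus W_0$ with $W_0=W\cap B_0$, so that $P=Rx^2R+RxW_0xR$; fixing a basis of $B_0$ extending a basis of $W_0$ and adjoining $1$ gives an $F$-basis of $R=F[x]*_FB$ consisting of reduced words $b_0x^{\alpha_1}b_1\cdots x^{\alpha_k}b_k$ ($\alpha_i\ge1$; $b_0,b_k$ basis letters of $B$; $b_1,\dots,b_{k-1}$ basis letters of $B_0$). Call such a word \emph{bad} if some $\alpha_i\ge2$, or if all $\alpha_i=1$ but some interior letter $b_i$ ($1\le i\le k-1$) is a basis vector of $W_0$; otherwise \emph{good}. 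I would prove that $P$ is exactly the $F$-span of the bad words: one inclusion is immediate, since a bad word of the first type lies in $Rx^2R$ and one of the second type in $RxW_0xR$; for the reverse, one brings a generator $r(xwx)s$ — with $r,s$ reduced words and $w$ a basis vector of $W$ — into normal form and checks, by a short case analysis on how $r$ ends and how $s$ begins, that the resulting single reduced word is always bad. I expect this bookkeeping — in which the fusion $x\cdot x=x^2$ inside $F[x]$ must be tracked — to be the main obstacle. Granting it, the good words form a basis of $R/P$, so $(R/P)_0=B$ and $(R/P)_p\isom B\tensor(B/W)^{\tensor(p-1)}\tensor B$ for $p\ge1$, with multiplication obtained by concatenating the coordinate strings and replacing the colliding pair of $B$-coordinates $b$ (last coordinate of the first factor) and $c$ (first coordinate of the second) by $\overline{bc}\in B/W$ (with the evident conventions when $p$ or $q$ is $0$). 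In particular every bad word has $x$-degree $\ge2$, so $P\cap(R_0\oplus R_1)=0$ and $P\ne R$.

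Now suppose $f,g\notin P$, with nonzero images $\bar f,\bar g$ in $R/P$. Splitting off the last coordinate of $\bar f\in(R/P)_p$, write $\bar f=\sum_j\phi_j\tensor b_j$ with the $\phi_j$ linearly independent over $F$ and $b_j\in B$, and put $V_f=\operatorname{span}_F\{b_j\}$, a finite-dimensional subspace of $B$; symmetrically, splitting off the first coordinate of $\bar g=\sum_k d_k\tensor\psi_k$ (the $\psi_k$ linearly independent), put $V_g=\operatorname{span}_F\{d_k\}$. Since $W$ is restricted there are $b,b'\in B$ with $V_fb\sub B_0$, $V_fb\nsub W$, $b'V_g\sub B_0$ and $b'V_g\nsub W$, and the defining property applied to $V=F$ also gives some $c\in B_0$ with $c\notin W$. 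Set $h=b(xcx)b'\in R$. A direct computation in the model above shows that, in $(R/P)_{p+q+2}$, one has $\overline{fhg}=\Phi\tensor\overline c\tensor\Psi$ with $\Phi=\sum_j\phi_j\tensor\overline{b_jb}$ and $\Psi=\sum_k\overline{b'd_k}\tensor\psi_k$. Each of the three tensor factors is nonzero: $\Phi\ne0$ because the $\phi_j$ are linearly independent while $V_fb\nsub W$ forces $\overline{b_jb}\ne0$ for some $j$; $\Psi\ne0$ for the symmetric reason; and $\overline c\ne0$ since $c\notin W$. Hence $\overline{fhg}\ne0$, i.e.\ $fhg\notin P$, contradicting $fRg\sub P$; therefore $f\in P$ or $g\in P$, and $P$ is prime. (The degenerate cases $p=0$ and $q=0$ are handled identically, reading $V_f=Ff$ respectively $V_g=Fg$.)
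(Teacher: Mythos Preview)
Your proof is correct and follows essentially the same route as the paper: both identify $(R/P)_p$ with $B\otimes(B/W)^{\otimes(p-1)}\otimes B$ (the paper via the decomposition $R=L'\oplus\bigoplus_n L_n$ and the computation $P=L'+\sum_{n\ge 2}P_n$, you via the span of good versus bad reduced words, which is the same thing in a chosen basis), and both then use the restricted hypothesis to exhibit a middle element $h$ with $\overline{fhg}\ne 0$ as a nonzero pure tensor. The only difference is cosmetic: the paper takes $h=bxb'$ of $x$-degree~$1$, obtaining $\overline{fhg}=\Phi\otimes\Psi$ directly, so your extra insertion $xcx$ and the auxiliary element $c$ are unnecessary though harmless.
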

\begin{proof}
Write $W = F \oplus W_0$ where $W_0 = W \cap B_0$. 
Let $L'$ be the ideal of $R$ generated by $x^2$. For $n \geq 0$ let us denote the vector spaces
$$L_n = B x B_0 x B_0 \cdots x B_0 x B,$$
where the degree with
respect to $x$ is $n$; so that $L_0 = B$ and $L_1 = B x B$. Setting $L = \sum_{n\geq 0} L_n$, we have
that $R = L' \oplus L$.

Let $P_n = L_n \cap P$; so $P_0 = P_1 = 0$, and for $n \geq 2$,
$$P_n = \sum B x B_0 x \cdots x B_0 x W_0 x B_0 x \cdots x B_0 x B$$
where in each summand one of the intermediate entries is $W_0$ and all the others are equal to $B_0$. For example $P_2 = B x W_0 x B$ and $P_3 = B x W_0 x B_0 x B + B x B_0 x W_0 x B$. Now,
since $F \sub W$, we have that $L' = RxxR = RxFxR \sub RxWxR = P$, and we can compute:
\begin{eqnarray*}
P & = & L' + P \\
& = & L' + (L'+L)x W x(L'+L) \\
& = & L' + Lx W x L \\
& = & L' + Lx W_0 x L \\
& = & L ' + \sum_{d,d' \geq 0} L_d x W_0 x L_{d'} \\
& = & L ' + \sum_{n \geq 0} \(\sum_{d+d' = n} L_d x W_0 x L_{d'}\) \\
& = & L' + \sum_{n \geq 2} P_n,
\end{eqnarray*}
since modulo $L'$, $xBx \equiv xB_0x$ and $xWx \equiv xW_0 x$.

Let $m \geq 1$. As a vector space, $L_m \isom B \tensor B_0 \tensor \cdots \tensor B_0 \tensor B$ with $m+1$ factors. This isomorphism carries $P_m$ to $\sum B \tensor B_0 \tensor \cdots \tensor B_0 \tensor W_0 \tensor B_0 \tensor \cdots \tensor B_0 \tensor B$ as above, and there is an isomorphism
$$\psi_m \co L_m / P_m \,\longrightarrow\, B \tensor \overline{B_0} \tensor \cdots \tensor \overline{B_0} \tensor B$$
with $m-1$ factors of the form $\overline{B_0} = B_0/W_0$. The image of $g \in L_m$ in $\overline{L_m} = L_m/P_m$ will be denoted by $\overline{g}$, hoping that no confusion is incurred by the double usage of the over-line.

We need to show that $P$ is prime. Since $L' \sub P$, it suffices to show that if $f,f' \in L$
and $f,f' \not \in P$, then $fBf' \not \sub P$. Furthermore since
$R$ is graded with respect to $x$, and $P$ is a homogeneous ideal with respect to this grading, 
we may assume that $f,f'$ are homogeneous with respect to $x$, so we can write
$$f = \sum_i a_{0,i} x a_{1,i} x \cdots x a_{n,i} \in L_n$$
and $$f' = \sum_j a'_{0,j} x a'_{1,j} x \cdots x a'_{n',j} \in
L_{n'}$$ where $a_{0,i},a_{n,i}, a'_{0,j}, a'_{n',j} \in B$ and
$a_{t,i}, a'_{t',j} \in B_0$ for $0 < t < n$ and $0 < t' < n'$. Let
$V$ be the vector space spanned by all the $a_{n,i}$ and $V'$ the
vector space spanned by all the $a'_{0,j}$. We say that $f$ ``ends in $V$'' and $f'$ ``begins in $V'$''. By assumption, there are elements $b,b' \in B$ such that $Vb, b'V' \sub B_0$, while $Vb \not \sub W_0$ and $b'V' \not \sub W_0$.

Since $Vb, b'V' \sub B_0$, we have that $f b x b' f' \in L_{n+n'+1}$.
Consider the commutative diagram
$$\xymatrix@C=32pt{L_n \tensor L_{n'} \ar@{->}[d]^{m} \ar@{->}[r]^{\theta} &
     \overline{L_n} \tensor \overline{L_{n'}} \ar@{->}[r]^(0.24){\psi_n \tensor \psi_{n'}} \ar@{->}[d]^{\bar{m}} &
     (B \tensor \overline{B_0} \tensor \cdots \tensor  \overline{B_0}
     \tensor B) \tensor  (B \tensor \overline{B_0} \tensor \cdots \tensor \overline{B_0}
     \tensor B) \ar@{->}[d]
    \\
L_{n+n'+1} \ar@{->}[r] &  \overline{L_{n+n'+1}}  \ar@{->}[r]^(0.4){\psi_{n+n'+1}} & B \tensor\overline{B_0}  \tensor \cdots \tensor  \overline{B_0}
     \tensor B
     }$$
\forget 
$$\xymatrix{L_n \tensor L_{n'} \ar@{->}[r]^{m} \ar@{->}[d]^{\theta} & L_{n+n'+1} \ar@{->}[d] \\
     \overline{L_n} \tensor \overline{L_{n'}} \ar@{->}[d]^{\psi_n \tensor \psi_{n'}} \ar@{->}[r]^{\bar{m}} & \overline{L_{n+n'+1}}  \ar@{->}[d]^{\psi_{n+n'+1}}
     \\
     (B \tensor \cdots 
     \tensor B) \tensor  (B \tensor \cdots 
     \tensor B) \ar@{->}[r] & B \tensor \cdots 
     \tensor B
     }$$
\forgotten 
where the domain of definition of the top-to-bottom maps is the 
elements in $L_n \tensor L_{n'}$ such that the left
factor ends in $B_0$ and the right factor begins in $B_0$ (and not
merely in $B$), and their image. Here, $m(g \tensor g') = g x g'$ and 
$\bar{m}(\overline{g} \tensor \overline{g'}) = \overline{g x g'}$
which is easily checked to be well-defined. The right-most arrow is
reduction of the two intermediate factors along $B_0 \ra B_0/W_0$.

\forget
$$\xymatrix{L_n \tensor L_{n'} \ar@{->}[r]^(0.4){m} \ar@{->}[d]^{\theta} & (L'+L_{n+n'+1})/L' \ar@{->}[d]\ar@{->}[r]^(0.61)\cong & L_{n+n'+1} \ar@{->}[d] \\
     \overline{L_n} \tensor \overline{L_{n'}} \ar@{->}[d]^{\psi_n \tensor \psi_{n'}} \ar@{->}[r]^(0.3){\bar{m}} & (L'+L_{n+n'+1})/(L'+P_{n+n'+1}) \ar@{->}[r]^(0.7)\cong & \overline{L_{n+n'+1}}  \ar@{->}[d]^{\psi_{n+n'+1}}
     \\
     (B \tensor \cdots 
     \tensor B) \tensor  (B \tensor \cdots 
     \tensor B) \ar@{-->}[rr] & & B \tensor \cdots 
     \tensor B
     }$$
where: $m(g \tensor g') = g x g' + L'$; $\bar{m}(\overline{g}
\tensor \overline{g'}) = g x g' + P_{n+n'+1}+L'$ which is easily
checked to be well-defined. The isomorphisms to the right are the
natural ones, induced by the fact that $L' \cap L_{n+n'+1} = 0$;
the bottom arrow is reduction of the two intermediate factors along
$B_0 \ra B_0/W_0$, and is defined on the subspace of elements whose
two intermediate factors are indeed in $B_0$. \forgotten

Now consider the element $f b\tensor b'f' \in L_n \tensor L_{n'}$, for the given $f \in L_n$ and $f' \in L_{n'}$. By assumption $\theta (f b\tensor b'f') = \overline{fb} \tensor \overline{b'f'}$ is non-zero, because $\overline{fb}, \overline{bf'} \neq 0$. Furthermore $\psi_n(\overline{fb})$ ends in $B_0$ and $\psi_{n'}(\overline{b'f'})$ begins in $B_0$, so $\psi_n(\overline{fb}) \tensor \psi_{n'}(\overline{b'f'})$ is in the domain of definition of the right-most arrow, which takes this element to $\psi_{n+n'+1}(\overline{fbxbf'})$. It remains to show that this element is nonzero.
But $\psi_n(\overline{fb})$ does not end in $W_0$, and $\psi_{n'}(\overline{b'f'})$ does not begin in $W_0$; hence their images in $B \tensor \overline{B_0} \tensor \cdots \tensor \overline{B_0} \tensor B$ are nonzero, and their tensor product, equal to $\psi_{n+n'+1}(\overline{fbxbf'})$, is nonzero as well.
\end{proof}

\begin{prop}\label{3.2}
Let $(K,\nu)$ be a valued field containing $F$ as a field of
scalars. For any $m \geq 0$, $W = \set{k \in K \suchthat \nu(k) \geq
-m}$ is a restricted subspace of $K$ (where the decomposition $K = F \oplus K_0$ is arbitrary).
\end{prop}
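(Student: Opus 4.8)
The plan is as follows. That $W$ is an $F$-subspace of $K$ containing $F$ is immediate from $\nu|_{F^{\times}} = 0$ together with the ultrametric inequality; the content is the restriction property. Since $K$ is commutative we have $Vb = bV$, so it is enough to produce, for each finite-dimensional $V \sub K$, a single element $b \in K$ with $Vb \sub K_0$ and $Vb \nsub W$. The requirement $Vb \sub K_0$ confines $b$ to an $F$-subspace of $K$ of codimension at most $\dim_F V$, whereas $Vb \nsub W$ merely asks $b$ to have sufficiently negative valuation; the task is to see that these two requirements are compatible.

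First I would record the codimension bound. Fix a basis $v_1,\dots,v_d$ of $V$ and let $\pi\co K \ra F$ be the projection along $K_0$. Then $U = \set{b \in K \suchthat Vb \sub K_0}$ is precisely the kernel of the $F$-linear map $b \mapsto (\pi(v_1 b),\dots,\pi(v_d b)) \in F^d$, hence an $F$-subspace of $K$ of codimension $\le d$.

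Now fix a nonzero $v_0 \in V$; it suffices to find $b \in U$ with $\nu(v_0 b) < -m$, for then $v_0 b \notin W$ and hence $Vb \nsub W$. Since $\nu$ is nontrivial (otherwise $W = K$, which is never restricted), its value group contains elements below any prescribed value, so choose $c \in K$ with $\nu(c)$ below both $0$ and $-m - \nu(v_0)$. The $d+1$ powers $c, c^2, \dots, c^{d+1}$ are $F$-linearly dependent modulo $U$, so some combination $b = \sum_{k=1}^{d+1} a_k c^k$ with $(a_k) \neq 0$ lies in $U$. Because $\nu|_{F^{\times}} = 0$ and $\nu(c) \neq 0$, the valuations $\nu(a_k c^k) = k\,\nu(c)$ over $\set{k \suchthat a_k \neq 0}$ are pairwise distinct, so $\nu(b)$ equals their minimum; since $\nu(c) < 0$ this minimum is $k_0\,\nu(c)$ with $k_0 = \max\set{k \suchthat a_k \neq 0} \geq 1$, so $\nu(b) \le \nu(c)$ (in particular $b \neq 0$). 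Therefore $\nu(v_0 b) = \nu(v_0) + \nu(b) \le \nu(v_0) + \nu(c) < -m$. Thus $b \in U$ gives $Vb \sub K_0$ while $v_0 b \notin W$ gives $Vb \nsub W$; by commutativity the same $b$ also serves for $bV$, so $W$ is restricted.

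The one genuine obstacle is valuation cancellation: an arbitrary $F$-linear combination of elements of very negative valuation can have large valuation, so one cannot simply take $b$ to be a ``generic'' element of $U$ of large negative valuation. Passing to the powers $c, c^2, \dots$ of a single element removes this difficulty, since distinct powers automatically have pairwise distinct valuations; and choosing $\nu(c)$ negative enough from the start (below $-m - \nu(v_0)$) guarantees that the surviving combination still clears the threshold even after absorbing the unknown exponent $k_0 \le d+1$ and the final multiplication by $v_0$.
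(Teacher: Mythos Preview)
Your proof is correct. The overall architecture matches the paper's: both identify $U=\{b\in K:\ Vb\sub K_0\}$ as a subspace of finite codimension, then argue that $U$ contains elements of arbitrarily small valuation, which forces $Vb\not\sub W$.

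The difference is in how you obtain the small-valuation element of $U$. You build it explicitly: pick $c$ with $\nu(c)$ very negative, use linear dependence of $c,c^2,\dots,c^{d+1}$ modulo $U$ to land a nonzero combination in $U$, and exploit the distinctness of the valuations $k\nu(c)$ to avoid cancellation. The paper bypasses this entirely with a two-line complement argument: choose any finite-dimensional complement $U'$ of $U$; since $\nu|_{F^\times}=0$, the ultrametric inequality bounds $\nu$ from below on $U'$ by the minimum of $\nu$ on a basis; hence if $\nu$ were also bounded below on $U$ it would be bounded below on all of $K=U\oplus U'$, contradicting nontriviality. So the ``cancellation obstacle'' you flag in your last paragraph is real for your constructive route but simply does not arise in the paper's contrapositive argument. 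Your version has the virtue of producing an explicit $b$, while the paper's is shorter and shows that no special trick is needed.
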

\begin{proof}
We first claim that if $U \sub K$ is an $F$-vector subspace of finite codimension,
then $\set{\nu(u) \suchthat u \in U}$ is unbounded from below.
Indeed, choose any finite dimensional complement $U'$, and notice that $\set{\nu(u')\suchthat u' \in U'}$ is bounded from below; so if $\nu(u)$ were bounded for $u \in U$, then $\nu(k)$ would be bounded over the set of $k \in K$.

Let $V\neq 0$ be a finite dimensional space. The space of elements $y$ such that $Vy \sub K_0$ has finite codimension, so by the previous argument contains elements of arbitrarily small value, for which $Vy \not \sub W$.
\end{proof}

\begin{cor}\label{almostexample}
Let $(K,\nu)$ be a valued field containing $F$ as a field of scalars. For fixed $m \geq 0$, let $W_m = \set{k \in K \suchthat \nu(k) \geq -m}$. Then the ideal generated by $xW_mx$ in $R = F[x] *_F K$ is prime.
\end{cor}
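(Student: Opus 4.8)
The plan is to obtain this as a direct consequence of Theorem~\ref{main} and Proposition~\ref{3.2}, so the real content has already been established; what is left is to check that the hypotheses apply and to identify the ideal in the statement with the one treated by Theorem~\ref{main}.

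First I would take $A = F[x]$ with its decomposition $F[x] = F \oplus xF[x]$, and $B = K$ with any vector-space decomposition $K = F \oplus K_0$, so that $R = F[x] *_F K$ is exactly the ring appearing in the corollary. Next I would record that $W_m$ is an $F$-subspace of $K$ containing $F$. Since $F$ sits inside $K$ as a field of scalars, the valuation $\nu$ is trivial on $F^{\times}$ --- otherwise some $\lambda \in F^{\times}$ would have $\nu(\lambda) \neq 0$, forcing $\nu(\lambda^{-1}) = -\nu(\lambda)$ to be negative and $\nu$ to be unbounded below on $F$ --- so $\nu(\lambda) = 0 \geq -m$ for every $\lambda \in F^{\times}$, whence $F \subseteq W_m$. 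Closure of $W_m$ under addition is the ultrametric inequality $\nu(k+k') \geq \min(\nu(k),\nu(k'))$, and closure under scaling by $F^{\times}$ follows from $\nu(\lambda k) = \nu(\lambda)+\nu(k) = \nu(k)$.

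With these points settled, Proposition~\ref{3.2} asserts exactly that $W_m$ is a restricted subspace of $K$, so Theorem~\ref{main} applies and yields that $P = RxW_mxR$ is a prime ideal of $R$. It remains to observe that the two-sided ideal generated by the set $xW_mx = \set{xwx \suchthat w \in W_m}$ equals $RxW_mxR$: indeed $R$ is unital, and every element of $RxW_mxR$ is a finite sum $\sum_i r_i (xw_ix) r_i'$ of products of elements of $xW_mx$ by elements of $R$, while conversely all such products lie in $RxW_mxR$ because $W_m$ is a subspace. Hence the ideal in the statement is $P$, which is prime.

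I expect no genuine obstacle here; the only point that needs a moment's care is the remark that a field of scalars inside a valued field carries the trivial valuation ($\nu|_F$ trivial), which is precisely what makes $W_m$ an honest $F$-subspace containing $F$ and so allows Theorem~\ref{main} and Proposition~\ref{3.2} to be invoked verbatim.
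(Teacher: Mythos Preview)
Your proof is correct and matches the paper's approach exactly: the corollary is stated there without proof, as the immediate combination of Theorem~\ref{main} with Proposition~\ref{3.2}, and your additional verifications (that $W_m$ is an $F$-subspace containing $F$, and that the ideal generated by $xW_mx$ equals $RxW_mxR$) just make explicit what the paper leaves tacit.

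One small correction on the point you flag at the end: the sentence ``otherwise \dots\ $\nu$ would be unbounded below on $F$'' is not itself a contradiction---for instance $\mathbb{Q}$ with a $p$-adic valuation has exactly this behaviour. The triviality of $\nu|_{F^\times}$ is not something you can deduce; rather, it is built into the hypothesis ``containing $F$ as a field of scalars'', and indeed the proof of Proposition~\ref{3.2} already relies on it (finite-dimensional $F$-subspaces having valuation bounded below fails otherwise). So you should simply cite it as part of the standing hypothesis rather than argue for it.
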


With the notation of \Cref{almostexample}, we now formulate the promised counterexample:
\begin{exmpl}[radical of a chain union which is a maximal ideal]\label{6.4}
Let $F$, $K$, $R$ and the $W_m$ be as above. By definition $\bigcup_{m\geq 0} W_m = K$.
Let $P_m$ be the (prime) ideal generated by $xW_mx$. Then $P_1 \sub P_2 \sub \cdots$ is a chain of prime ideals in $R$, and
$\bigcup_{m\geq 0} RxW_mxR = RxKxR = (RxR)^2$. The radical $RxR$ is thus maximal, as $R/RxR \,\isom\, K$.
\end{exmpl}

\section{Matrix constructions}\label{sec:mat}

This section shows that \PP\ and \SPSP\ are independent: the algebra in \Eref{ExKL} satisfies \PP\ but not \SPSP, and the algebra in \Eref{Ex4} satisfies \SPSP\ but not \PP.

\subsection{\PP\ does not imply \SPSP}

As mentioned in the introduction, Kaplansky conjectured that a
semiprime ring all of whose prime quotients are von Neumann regular,
is regular. Fisher and Snider~ \cite{FS} proved that this is the
case if the ring satisfies \SPSP\ (also see \cite[Thm.~1.17]{G}),
and gave a counterexample which lacks this property, due to
Kaplansky and Lanski \cite[Example~1.19]{G}. We repeat the example
and exhibit, in this ring, an explicit ascending chain of semiprime
ideals whose union is not semiprime.


\begin{exmpl}[Kaplansky-Lanski]\label{ExKL}
(A ring whose prime ideals are maximal, but without \SPSP). Let
$R$ be the ring of sequences of $2$-by-$2$ matrices which
eventually have the form $\begin{pmatrix}
\alpha & \beta_n \\
0 & \alpha
\end{pmatrix}$ in the $n$th place, clearly a semiprime ring.

Let $I_n$ be the set of sequences in $R$, which are zero from the
$n$th place onward. Clearly $R/I_n \isom R$, so the ideals are
semiprime. However $\bigcup I_n$ is composed of sequences of
matrices which are eventually zero, and $aRa$ is eventually zero
for $a = \left(\begin{pmatrix}
0 & 1  \\
0 & 0
\end{pmatrix}, \begin{pmatrix}
0 & 1  \\
0 & 0
\end{pmatrix}, \dots \right)$; hence $R/\bigcup I_n$ is not semiprime. On the other hand by the
argument in \cite{FS}, every prime ideal of $R$ is maximal, so
there are no infinite chains of primes and \PP\ holds trivially.
\end{exmpl}

\subsection{\SPSP\ does not imply \PP}

In the rest of this section we investigate \PP\ and \SPSP\ for rings of the form $\hat{A} = \smat{A}{M}{M}{A}$ where $A$ is an integral domain and
 $M \normali A$ is a nonzero ideal. We show that they always satisfy \SPSP, and give an example which does not have \PP. Clearly $\hat{A}$ is a prime ring. Let us describe the ideals of this ring.
\begin{prop} \label{whoissp}
\begin{enumerate}
\item The ideals of $\hat{A}$ have the form $\hat{I} = \smat{I_{11}}{I_{12}}{I_{21}}{I_{22}}$, where
for  $1 \le i,j\le 2$, $I_{ij} \normal A$ (not necessarily proper),
$I_{ii'} \sub M$, and $MI_{ij} \sub I_{i'j} \cap I_{ij'}$ (where $1'
= 2$ and $2' = 1$).
\item The semiprime ideals of $\hat{A}$ are of the form
\begin{equation*}
\smat{I}{M\cap I}{M \cap I'}{I'},\end{equation*} where
$I,I'$ are semiprime ideals of $A$, and $M \cap I' = M \cap I$. 
\end{enumerate}
\end{prop}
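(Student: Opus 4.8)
The plan is to treat the two parts separately, using the standard Peirce decomposition of $\hat A = e_{11}\hat A e_{11} \oplus e_{11}\hat A e_{22} \oplus e_{22}\hat A e_{11} \oplus e_{22}\hat A e_{22}$ via the matrix units $e_{ij}$. For part~(1), given an arbitrary two-sided ideal $\hat I \normali \hat A$, set $I_{ij} = \{a \in A : a e_{ij} \in \hat I\}$; one checks immediately that $\hat I = \smat{I_{11}}{I_{12}}{I_{21}}{I_{22}}$ since $\hat I = \bigoplus_{i,j} e_{ii}\hat I e_{jj}$ and $e_{ii}\hat I e_{jj}$ is determined by which scalar multiples of $e_{ij}$ it contains (here the off-diagonal entries lie in $M e_{ij}$ because that is the $(i,j)$-component of the ring itself). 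Each $I_{ij}$ is an additive subgroup; that $I_{11}, I_{22} \normali A$ and $I_{12}, I_{21}$ are $A$-submodules of $M$ follows from closure of $\hat I$ under left and right multiplication by $\mathrm{diag}(a,a)$ and by the observation $I_{12}, I_{21} \sub M$. The containments $I_{ii'} \sub M$ are then immediate, and the key relation $M I_{ij} \sub I_{i'j} \cap I_{ij'}$ comes from multiplying $a e_{ij} \in \hat I$ on the left by $m e_{i'i} \in \hat A$ (giving $ma\, e_{i'j} \in \hat I$, so $ma \in I_{i'j}$) and on the right by $m e_{jj'} \in \hat A$ (giving $am\, e_{ij'} \in \hat I$). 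Conversely, any data satisfying these constraints assembles into a genuine two-sided ideal: one verifies closure under multiplication by the four Peirce components $A e_{11}$, $M e_{12}$, $M e_{21}$, $A e_{22}$ of $\hat A$, and exactly the listed conditions are what make each product land back inside.

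For part~(2), I would first use part~(1) to write a semiprime ideal as $\hat I = \smat{I_{11}}{I_{12}}{I_{21}}{I_{22}}$ and then extract the constraints imposed by semiprimeness, i.e.\ by $x \hat A x \sub \hat I \Rightarrow x \in \hat I$ for homogeneous $x = a e_{ij}$. Taking $x = a e_{11}$: $x\hat A x$ contains $a A a\, e_{11}$ and $aMa\, e_{11}$, but since $M \neq 0$ and $A$ is a domain, $a A a \sub I_{11}$ already forces $a^2 A \sub I_{11}$, hence (as $A$ is commutative and a domain, with $I_{11}$ semiprime in $A$ once we know it is an ideal) $a \in I_{11}$ only if $I_{11}$ is semiprime in $A$ — so semiprimeness of $\hat I$ forces $I_{11} =: I$ and $I_{22} =: I'$ to be semiprime ideals of $A$. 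Next, taking $x = a e_{12}$ with $a \in M$: the nonzero entries of $x \hat A x$ sit in the $(1,2)$ slot and equal $a M a\, e_{12}$ (the inner $\hat A$-factor contributes its $(2,1)$-component $M e_{21}$), so $aMa \sub I_{12} \Rightarrow a \in I_{12}$; since $A$ is a domain and $M \neq 0$, $aMa = 0$ is impossible for $a \neq 0$, and $aMa \sub I_{12}$ with $I_{12}$ an $A$-submodule of $M$ forces $a^2 \in \{b : bM \sub I_{12}\}$. Combining this with the relations $M I_{11} \sub I_{12}$ and $M I_{12} \sub I_{22}$ from part~(1), one shows $I_{12} = M \cap I$ and symmetrically $I_{21} = M \cap I'$: the containment $M I \sub I_{12}$ together with $I_{12} \sub M$ and $I_{12} \sub I_{22}'$-type relations pins $I_{12}$ between $MI$ and $M \cap I$, and semiprimeness closes the gap. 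Finally, the equality $M \cap I = M \cap I'$ falls out of the cross relations $M I_{12} \sub I_{22}$ and $M I_{21} \sub I_{11}$ fed back through $M \neq 0$ and the domain property: $M(M\cap I) \sub I'$ forces $M \cap I \sub I'$ hence $M \cap I \sub M \cap I'$, and symmetrically. The converse — that any such $\smat{I}{M\cap I}{M\cap I'}{I'}$ with $I, I'$ semiprime in $A$ and $M \cap I = M \cap I'$ is a semiprime ideal of $\hat A$ — is checked by verifying it is an ideal via part~(1) and then that $x\hat Ax \sub \hat I$ implies $x \in \hat I$ on each Peirce component.

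I expect the main obstacle to be the off-diagonal semiprimeness analysis: diagonal entries reduce cleanly to semiprimeness in the commutative domain $A$, but for $x = a e_{12}$ the set $x \hat A x$ only sees $a M a\, e_{12}$ rather than $a A a$, so one must leverage both that $M \neq 0$ (to rule out degenerate vanishing) and the part~(1) cross-relations $M I_{ij} \sub I_{i'j} \cap I_{ij'}$ simultaneously to squeeze $I_{12}$ to exactly $M \cap I$ and to force the compatibility $M \cap I = M \cap I'$. Getting the logic of "$aMa \sub I_{12}$, $M \neq 0$, $A$ a domain $\Rightarrow a \in M \cap I$" exactly right — and noting that without $M \cap I = M \cap I'$ the would-be semiprime ideal genuinely fails to be radical — is the crux; everything else is bookkeeping with the Peirce decomposition that I would not write out in full.
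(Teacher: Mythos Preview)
Your approach is essentially the paper's: Peirce decomposition for part~(1), and for part~(2) testing the semiprimeness condition on matrix-unit elements $a e_{ij}$, with the off-diagonal step hinging on the idealizer identity $M \cap (I_{11}:M) = M \cap I_{11}$ for semiprime $I_{11}$ (which the paper states explicitly where you write ``semiprimeness closes the gap''). The one place to tighten is the converse: checking $x\hat A x \sub \hat I \Rightarrow x \in \hat I$ only for Peirce-homogeneous $x$ does suffice, but because for a general $x = \sum a_{ij}e_{ij}$ the inclusion $x \hat A x \sub \hat I$ already yields $A_{ji}a_{ij}^2 \sub I_{ij}$ for each pair $(i,j)$ separately (take the middle factor in $e_{ji}A_{ji}$)---the paper makes this reduction explicit, and you should too.
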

\begin{proof}
\begin{enumerate}
\item This is well known and easy.
\item Write $A_{ij} = A$ if $i = j$ and $A_{ij} = M$ otherwise. We are given an ideal $\hat{I} \normali \hat{A}$ which thus
can be written as $\hat{I} = \smat{I_{11}}{I_{12}}{I_{21}}{I_{22}}$,
satisfying the conditions of (1). Clearly $\hat{I}$ is semiprime if
and only if for every $a_{ij} \in A_{ij}$,
\begin{center}
   ( $\sum_{j,k} A_{jk} a_{ij}a_{k\ell} \sub I_{i\ell}$ for every $i,\ell$) implies ( $a_{i\ell} \in I_{i\ell}$ for every $i,\ell$).
\end{center}
Assuming that this condition holds, fix $i,j$ and choose $a_{k\ell}
= 0$ for every $(k,\ell) \neq (i,j)$; then
\begin{center}
$\COND$ for $a_{ij} \in A_{ij}$, $A_{ji} a_{ij}^2 \sub I_{ij}$ implies $a_{ij} \in I_{ij}$.
\end{center}
On the other hand if Condition $\COND$ holds and $\sum_{j,k} A_{jk}
a_{ij}a_{k\ell} \sub I_{i\ell}$ for every $i,\ell$, then in
particular $A_{ji} a_{ij}^2 \sub I_{ij}$, so each $a_{i\ell} \in
I_{i\ell}$. We conclude that $\hat{I}$ is semiprime if and only if
$\COND$ holds for every $i,j$.

We claim that $\COND$ is equivalent to $I_{ii}$ being semiprime in
$A$ with $$M \cap I_{11} \sub I_{ij},\qquad \forall i\neq j.$$
Indeed, for $i = j$, condition $\COND$ requires that the $I_{ii}$
are semiprime in~$A$. Assuming this, the condition is ``for $a_{ij} \in A_{ij}$, $Ma_{ij}^2
\sub I_{ij}$ implies $a_{ij} \in I_{ij}$ for $i \neq j$.'' In light
of the standing assumption that $a_{ij} \in A_{ij} = M$, we claim
that this is equivalent to $M \cap I_{11} \sub I_{ij}$. Indeed, for
every $b \in M$, $Mb^2 \sub I_{ij}$ iff $b \in I_{11}$ (Proof: If
$b^2 M \sub I_{ij}$ then $b^4 \in (bM)^2 = b^2M\cdot M \sub I_{ij}M \sub I_{11}$,
so $b \in I_{11}$. On the other hand if $b \in M \cap I_{11}$ then
$b^2 M \sub MI_{11} \sub I_{ij}$), so the condition becomes ``for $b \in M$, $b \in I_{11}$ implies $b \in I_{ij}$ for $i\neq j$'', as claimed.

We have shown that $\hat{I}$ is semiprime if and only if
$I_{11},I_{22}$ are semiprime in $A$ and $M \cap I_{11} \sub I_{12}
\cap I_{21}$.

Now assume that $I_{ii}$ are semiprime, and that $M \cap I_{11} \sub
I_{12} \cap I_{21}$. Denote the idealizer of an ideal $I$ by $(I:M)
= \set{x \in A \suchthat xM \sub I}$, and notice that $M \cap (I:M)
= M \cap I$ when $I$ is semiprime. But $I_{12} M \sub I_{11}$, implying
$$I_{12}\sub M \cap (I_{11}:M) = M \cap I_{11} \sub I_{12},$$ so
$I_{12} = M \cap I_{11}$ and likewise $I_{21} = M\cap I_{11}$. By
symmetry $I_{12} = M \cap I_{22}$ as well, so $M \cap I_{11} = M
\cap I_{22}$.
\end{enumerate}
\end{proof}


\forget
Recall that for ideals $I,M$ in a commutative ring $A$, $(I:M) =
\set{a \in A\suchthat aM \sub I}$.
The properties of $(I:M)$ are well known, and we review the ones
that we need.

\begin{lem}\label{compactness}
Let $A$ be a commutative ring, with an ideal $M \normali A$. For
any semiprime ideal $I \normali A$,
$$M \cap (I:M) = M \cap I.$$
\end{lem}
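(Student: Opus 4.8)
The plan is to verify the two inclusions of $M \cap (I:M) = M \cap I$ separately; only one direction uses that $I$ is semiprime, and that direction is essentially a one-line argument.

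The inclusion $M \cap I \sub M \cap (I:M)$ holds for an arbitrary ideal $I \normali A$: if $a \in I$ then $aM \sub I$ since $I$ absorbs multiplication, so $a \in (I:M)$; intersecting with $M$ gives the claim. No hypothesis on $I$ is needed here.

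For the reverse inclusion $M \cap (I:M) \sub M \cap I$, I would take $a \in M$ with $aM \sub I$. Because $a$ itself lies in $M$, we get $a^2 \in aM \sub I$. Since $I$ is semiprime, $a^2 \in I$ forces $a \in I$ (in a commutative ring this is just the defining property of a semiprime ideal), so $a \in M \cap I$, as required.

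I do not expect a genuine obstacle: the only input beyond routine bookkeeping is the implication ``$a^2 \in I \Rightarrow a \in I$'' for semiprime $I$, which is exactly the computation already performed for the diagonal ideals $I_{ii}$ inside the proof of Proposition~\ref{whoissp}. If one wishes to emphasize that semiprimeness is really used, one can add the remark that the inclusion fails in general — for instance, with $A = F[t]$, $M = (t)$ and $I = (t^2)$ one has $(I:M) = (t) = M$ while $M \cap I = (t^2) \subsetneq (t)$.
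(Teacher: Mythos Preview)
Your proof is correct and matches the paper's own argument essentially verbatim: one inclusion is trivial since $I \sub (I:M)$, and for the other you use $a \in M \cap (I:M) \Rightarrow a^2 \in aM \sub I \Rightarrow a \in I$ by semiprimeness. The added counterexample showing the hypothesis is necessary is a nice touch but not in the paper.
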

\begin{proof}
The inclusion $I \sub (I:M)$ is trivial. In the other direction let $x \in M \cap (I:M)$, then $x^2 \in xM \sub I$, so $x \in I$ by assumption.
\end{proof}

\begin{lem}\label{prep}
Let $A$ be a commutative ring with an ideal $M \normali A$. Let $I,J \sub M$ be ideals of $A$ such that $I$ is semiprime, $M I\sub J$ and $M J\sub I$.

\begin{enumerate}
\item\label{x1} For $b \in M$, $b^2 M \sub J$ iff $b \in I$.

\item\label{x4} The condition ``For every $b \in M$, if $b^2M \sub
J$, then $b \in I$"
is equivalent to $M \cap I \sub J$.

\item\label{x3} Let $I'$ be another semiprime ideal such that $MI' \sub J$ and $MJ \sub I'$. Then $M \cap I = M \cap I'$.

\end{enumerate}
\end{lem}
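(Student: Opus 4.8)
The plan is to put all of the content into part~(1); parts~(2) and~(3) then follow formally. For part~(1) I would prove the two implications directly. If $b \in M$ with $b^2M \sub J$, then, since $b \in M$, we get $b^3 = b^2\cdot b \in b^2M \sub J$, hence $b^4 = b\cdot b^3 \in MJ \sub I$; as $I$ is semiprime in the commutative ring $A$, having a power of $b$ in $I$ forces $b \in I$. Conversely, if $b \in I$ then $b^2 \in I$, so $b^2M \sub IM = MI \sub J$. (Neither direction really uses the standing hypotheses $I,J \sub M$, which are included only to match the setting of \Pref{whoissp}.)

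Part~(2) is then immediate from part~(1): for $b \in M$, the hypothesis ``$b^2M \sub J$'' is equivalent to ``$b \in I$'', so the implication appearing in~(2) --- which should be read as ``for every $b \in M$, if $b^2M \sub J$ then $b \in J$'', in accordance with Condition~$\COND$ of \Pref{whoissp} (with conclusion $b \in I$ it is vacuously true by part~(1), so this is the intended reading) --- says exactly that $b \in M$ together with $b \in I$ forces $b \in J$, i.e.\ $M \cap I \sub J$.

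For part~(3), observe that $I'$ satisfies precisely the hypotheses imposed on $I$ (it is semiprime, with $MI' \sub J$ and $MJ \sub I'$), so part~(1) applies verbatim with $I'$ in place of $I$; hence for every $b \in M$ we have $b \in I \iff b^2M \sub J \iff b \in I'$, and therefore $M \cap I = M \cap I'$. There is no real obstacle in any of this; the only points that need a little care are the bookkeeping of powers of $b$ in the forward direction of~(1) (so that semiprimeness of $I$ is invoked on a genuine power of $b$), and parsing the condition in~(2) so that it coincides with~$\COND$.
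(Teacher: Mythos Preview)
Your argument is correct and in fact slightly more self-contained than the paper's. For part~(1) the paper does not work with powers of $b$ directly; instead it passes through the idealizer $(I:M) = \set{a \in A \suchthat aM \sub I}$ together with the auxiliary observation that $M \cap (I:M) = M \cap I$ whenever $I$ is semiprime. Concretely: from $b^2M \sub J$ one gets $(bM)^2 = b^2M\cdot M \sub JM \sub I$, hence $bM \sub I$ by semiprimeness, i.e.\ $b \in (I:M)$, and then $b \in M \cap (I:M) = M \cap I$. Your route via $b^3 \in J$ and $b^4 \in MJ \sub I$ reaches the same conclusion while bypassing this auxiliary lemma entirely.

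You are also right to flag part~(2): with conclusion ``$b \in I$'' the displayed condition is automatically true by part~(1), so the intended conclusion is ``$b \in J$'' (this is exactly the shape of $\COND$ in \Pref{whoissp}, and is how the lemma is actually used there). With that reading, both your derivation and the paper's one-line ``This is~(1)'' amount to the same rewriting of the condition as $M\cap I \sub J$.

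For part~(3) the paper again routes through the idealizer, writing the chain $b \in M\cap I \Leftrightarrow b \in M\cap(I:M) \Leftrightarrow (b\in M \text{ and } b^2M\sub J) \Leftrightarrow b\in M\cap(I':M) \Leftrightarrow b\in M\cap I'$; your direct symmetric application of part~(1) is equivalent and more transparent. Your parenthetical remark that the hypothesis $I \sub M$ is not actually used in the proof of~(1) is correct, and is precisely what legitimises applying~(1) to $I'$, for which $I' \sub M$ is not assumed.
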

\begin{proof}
\begin{enumerate}
\item If $b^2 M \sub J$ then $(bM)^2 \sub JM \sub I$ so $bM \sub I$ and $b \in M \cap (I:M) \sub I$ by \Pref{compactness}. On the other hand if $b \in M \cap I$ then $b^2 \in MI \sub J$ and $b^2M \sub J$.
\item This is \eq{x1}.
\item Indeed, by \Pref{compactness} and \eq{x1}, $b \in M \cap I$ iff $b \in M \cap (I:M)$ iff ($b \in M$ and $b^2M \sub J$) iff $b \in M \cap (I':M)$ iff $b \in M \cap I'$.
\end{enumerate}
\end{proof}
\forgotten

\begin{prop}\label{main0}
 The ring $\hat{A}$ satisfies
\SPSP.
\end{prop}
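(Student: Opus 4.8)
The plan is to prove that $\hat A$ satisfies \SPSP\ by reducing the statement, via the classification in \Pref{whoissp}(2), to the corresponding statement about chains of semiprime ideals in the commutative domain $A$, where it holds trivially. First I would take an ascending chain $\hat I^{(1)} \sub \hat I^{(2)} \sub \cdots$ of semiprime ideals of $\hat A$. By \Pref{whoissp}(2) each $\hat I^{(k)}$ has the form $\smat{I^{(k)}}{M \cap I^{(k)}}{M \cap I^{(k)}}{I^{(k)}_{22}}$ with $I^{(k)}, I^{(k)}_{22}$ semiprime in $A$ and $M \cap I^{(k)} = M \cap I^{(k)}_{22}$; set $J = \bigcup_k I^{(k)}$ and $J' = \bigcup_k I^{(k)}_{22}$. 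Since a union of an ascending chain of semiprime (equivalently, radical) ideals in a commutative ring is semiprime, both $J$ and $J'$ are semiprime in $A$.

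Next I would identify the union: $\bigcup_k \hat I^{(k)} = \smat{J}{\,\bigcup_k (M \cap I^{(k)})\,}{\,\bigcup_k (M \cap I^{(k)})\,}{J'}$, and observe that $\bigcup_k (M \cap I^{(k)}) = M \cap \bigcup_k I^{(k)} = M \cap J$ because a directed union commutes with intersection. Likewise $M \cap J = M \cap J'$, being the ascending union of the equal ideals $M \cap I^{(k)} = M \cap I^{(k)}_{22}$. Hence $\bigcup_k \hat I^{(k)} = \smat{J}{M \cap J}{M \cap J}{J'}$ with $J, J'$ semiprime and $M \cap J = M \cap J'$, which by the converse direction of \Pref{whoissp}(2) is exactly a semiprime ideal of $\hat A$. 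This completes the argument.

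The only genuine content is \Pref{whoissp}(2), which we already have in hand; the remaining steps are the routine facts that radical ideals are closed under directed unions in a commutative ring and that directed unions commute with finite intersections. Accordingly I do not expect a real obstacle here — the proof is essentially a bookkeeping reduction. The one point that warrants a line of care is checking that the off-diagonal entries of the union are honestly $M \cap J$ (and not merely contained in it), for which the directedness of the chain is what is used; and symmetrically that the two semiprime diagonal ideals $J$ and $J'$ continue to satisfy $M \cap J = M \cap J'$ in the limit, which again follows because this equality holds at every finite stage and both sides are ascending unions of the same ideals.
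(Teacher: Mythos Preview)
Your proof is correct and follows essentially the same route as the paper: both invoke \Pref{whoissp}(2) to write each term of the chain in the standard form, take componentwise unions, and verify that the resulting matrix again satisfies the conditions of \Pref{whoissp}(2). The only difference is that you spell out the routine verifications (directed unions commute with intersection with $M$, and the equality $M\cap J = M\cap J'$ persists in the limit) that the paper leaves implicit.
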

\begin{proof}
By \Pref{whoissp} every chain of semiprime ideals $T_1 \sub T_2 \sub
\cdots$ in $\hat{A}$ has the form $T_n =
\smat{I_n}{J_n}{J_n}{I'_n}$, $I_n$ and $I'_n$ are ascending chains
of semiprime ideals of~$A$, and $J_n = M \cap I_n = M \cap I_n'$.
The union of this chain is $\smat{\bigcup I_n}{L}{L}{\bigcup I_n'}$
where $L = M \cap \bigcup I_n = M \cap \bigcup I_n'$, which is
semiprime.
\end{proof}

{}Using the description of the semiprime ideals, it is not difficult to obtain the following.
\begin{prop}\label{main0.2}
\begin{enumerate}
\item \label{Y5} The prime ideals of $\hat{A}$ are $\smat{J}{M}{M}{A}$ and $\smat{A}{M}{M}{J}$ for prime ideals $J \normali A$ containing $M$, and $I^{0} = \smat{I}{M \cap I}{M \cap I}{I}$ for prime ideals $I \normali A$ not containing $M$.
%
%
\item The \almostprime\ ideals of $\hat{A}$ are of the form
$\smat{M'}{M}{M}{M'}$ where $M' \normali A$ is a prime ideal
containing $M$, which can be written as a union of an ascending
chain of primes not containing $M$.
\end{enumerate}
\end{prop}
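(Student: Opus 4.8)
The plan is to combine the structural description of prime ideals (part~(1)) with the description of semiprime ideals from Proposition~\ref{whoissp}(2), and then analyze which primes can appear in an ascending chain. First I would prove part~(1). Given a prime ideal $\hat I = \smat{I_{11}}{I_{12}}{I_{21}}{I_{22}}$ of $\hat A$, it is in particular semiprime, so Proposition~\ref{whoissp}(2) applies: $\hat I = \smat{I}{M\cap I}{M\cap I}{I'}$ with $I,I'$ semiprime in $A$ and $M\cap I = M\cap I'$. Primeness of $\hat A/\hat I$ then forces one of two things. If $M \not\subseteq I$ (equivalently $M\not\subseteq I'$), I would show $I = I'$: using the idempotents $e_1 = \smat{1}{0}{0}{0}$, $e_2 = \smat{0}{0}{0}{1}$ one sees that $I$ and $I'$ play symmetric roles, and the off-diagonal products $e_1\hat A e_2$, $e_2\hat A e_1$ being nonzero modulo $\hat I$ (since $M \not\subseteq I$) glue the two diagonal quotients together; concretely, from $MI' \subseteq$ (off-diagonal entry) $\subseteq M\cap I$ and $M\cap I = M\cap I'$ one deduces $I\subseteq I'$ and symmetrically, so $I=I'$ and $\hat I = I^0$ in the stated notation, with $I$ prime because $\hat A/I^0 \cong \M[2](A/I)$ localized appropriately — more simply, $I^0$ is prime iff $I$ is, which is a direct check on $2\times 2$ matrices. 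If instead $M \subseteq I$, then $M\cap I = M$, so the off-diagonal entries are both $M$; the ideal is $\smat{I}{M}{M}{I'}$ and I'd check that primeness forces one of $I, I'$ to be all of $A$ (if both were proper, the ideals $\smat{A}{M}{M}{I'}$ and $\smat{I}{M}{M}{A}$ would multiply into $\hat I$ without being contained in it), giving the two listed families with $J = I$ or $J = I'$ prime and containing $M$. Conversely each listed ideal is readily checked to be prime.

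Next, part~(2). An \almostprime\ ideal is by definition $Q = \bigcup Q_n$ for a strictly ascending chain of primes $Q_n$, with $Q$ not prime. Each $Q_n$ is one of the three types from part~(1). A chain cannot mix the ``first-type'' family $\smat{J}{M}{M}{A}$ with the ``second-type'' family $\smat{A}{M}{M}{J}$ beyond one step (their union would have $A$ in both diagonal slots, i.e. be all of $\hat A$, not an \almostprime). If the chain is eventually of the form $\smat{J_n}{M}{M}{A}$ with $J_n$ primes containing $M$, then $\bigcup Q_n = \smat{\bigcup J_n}{M}{M}{A}$, which is prime provided $\bigcup J_n$ is prime in $A$; but $A$ is a domain and a commutative ring, so $\bigcup J_n$ is automatically prime and $Q$ would not be an \almostprime. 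The same applies to the second-type family. Hence any \almostprime\ chain must be eventually, and therefore entirely (after discarding finitely many terms, which does not change the union), of the third type: $Q_n = I_n^0 = \smat{I_n}{M\cap I_n}{M\cap I_n}{I_n}$ with $I_n$ primes of $A$ \emph{not} containing $M$. Then $\bigcup Q_n = \smat{\bigcup I_n}{M\cap\bigcup I_n}{M\cap \bigcup I_n}{\bigcup I_n}$. Set $M' = \bigcup I_n$; since $A$ is commutative, $M'$ is prime. For $Q$ to fail to be prime, part~(1) shows $\bigcup Q_n$ cannot equal $(M')^0$, so we must be in the ``$M \subseteq M'$'' regime, i.e. $M' \supseteq M$, in which case $M\cap M' = M$ and $Q = \smat{M'}{M}{M}{M'}$. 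This is exactly the asserted form, and it is genuinely not prime by part~(1) since it is $\smat{M'}{M}{M}{M'}$ with $M'$ proper (a prime of $A$) and neither diagonal entry equal to $A$.

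The main obstacle I anticipate is the careful bookkeeping in part~(1): ruling out the ``mixed'' possibilities and pinning down exactly when a semiprime ideal of the form $\smat{I}{M\cap I}{M\cap I}{I'}$ is prime, which requires using the conditions $I_{ij}M\subseteq I_{i'j}\cap I_{ij'}$ from Proposition~\ref{whoissp}(1) together with the product-into-$\hat I$ tests, and handling the borderline case $M\subseteq I$ versus $M\not\subseteq I$ uniformly. Everything in part~(2) beyond invoking part~(1) is then soft: the only real content is the observation that an ascending chain whose union is non-prime cannot contain two incomparable ``corner'' primes, and the fact — inherited from $A$ being commutative — that a union of a chain of primes in $A$ is again prime, which is what forces the chain into the single type $I^0$ and forces $M' \supseteq M$.
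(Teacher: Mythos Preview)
Your approach is essentially the same as the paper's, and your treatment of part~(2) is in fact more carefully spelled out than the paper's rather terse argument. There is, however, a small circularity in your sketch of part~(1) that you should fix. To deduce $I=I'$ in the case $M\not\subseteq I$, you use ``$MI'\subseteq M\cap I\subseteq I$, hence $I'\subseteq I$'' (and symmetrically); but that implication requires $I$ to be \emph{prime} in $A$, and you only establish primeness of $I$ afterwards, via ``$I^0$ prime iff $I$ prime''. The remedy is easy: first observe directly that the diagonal corners of a prime $\hat I$ are prime in $A$ (for $a,b\in A$, $(ae_{11})\hat A(be_{11})=abAe_{11}$, so $ab\in I$ forces $ae_{11}\in\hat I$ or $be_{11}\in\hat I$), and then run your containment argument. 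The paper's published proof of part~(1) is a one-line pointer to Proposition~\ref{whoissp}, but the detailed version of the argument (which your sketch parallels) indeed establishes primeness of the corners before comparing $I$ and $I'$.

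One further cosmetic point in part~(2): once you know that a first-type (or second-type) prime cannot be followed in the chain by a third-type prime (since $A\subseteq I$ is impossible), the chain containing \emph{any} corner-type prime is automatically eventually of that corner type, and hence has prime union. So the conclusion is that an \almostprime\ chain consists \emph{entirely} of ideals $I_n^0$ from the outset; your ``discard finitely many terms'' step is harmless but unnecessary.
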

\begin{proof}
 Notation as in \Pref{whoissp}(1), we are done by \Pref{whoissp}(2)
 unless some ${I_{11}} = A$, in which case we must have (1).
 If the chain of primes includes an ideal with $A$ in one of the corners, then every higher term has the
  same form, and the union is determined by the union of the ideals in the other corner, which is prime since $A$ is
   commutative. We thus assume that the chain has the form $I_1^0 \sub I_2^0 \sub \cdots$ where $I_1 \sub I_2 \sub \cdots$
    are primes in $A$, not containing $M$. The union is clearly $\hat I^0$ where $\hat I = \bigcup I_n$ is prime, and  $\hat I^0$ is not a prime iff $M \sub \hat I$.
\end{proof}

Suppose $A$ is a prime PI-ring, integral over its center $C$. In \cite{BV} it is shown that $A$ satisfies the properties Lying Over and Going Up over $C$, which gives a correspondence of chains of primes between the two rings. The next example shows that the union of chains in not preserved.

\begin{exmpl}[A prime PI-ring, integral over its center, satisfying \SPSP\ but not (uniquely-)\PP]\label{Ex4}
Let $F$ be a field. Let $\hat{A} = \smat{A}{M}{M}{A}$, where $A =
F[\lam_1,\dots]$ is the ring of polynomials in countably many
variables $\lam_1,\lam_2,\dots$, and $M = \ideal{\lam_1,\dots}$.
Clearly $\hat{A} \subset M_2(A)$ is integral over $A$. Choose $I_n
= \ideal{\lam_1,\dots,\lam_n}$. Then $T_n =
\smat{I_n}{I_n}{I_n}{I_n} \normali \hat{A}$ form an ascending
chain of primes by \Pref{main0.2}.\eq{Y5}, 
but their union $\bigcup T_n = \smat{M}{M}{M}{M}$ is obviously not
prime. Therefore $\hat{A}$ satisfies \SPSP\ (\Pref{main0}) but not
\PP. Furthermore $\hat{A}/\bigcup T_n\cong A/M\times A/M$ which has
two minimal ideals, so uniquely-\PP\ also fails.
\end{exmpl}

\section{The $\PP$-index}\label{sec:PP}

Let $\tilde P = \set{P_{\alpha}}$ be a chain of prime ideals in a
ring $R$. The number of non-prime unions of subchains of $\tilde P$
is called the {\bf{index}} of $\tilde P$ (either finite or
infinite).
The {\bf{\PP-index}} of $R$, denoted by $\PP(R)$, is the supremum of the indices of all chains of primes in $R$.

\begin{prop}
For any ring $R$, $\PP(R) = \sup \PP(R/P)$ where $P\normali R$ ranges over the prime ideals.
\end{prop}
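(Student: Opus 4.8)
The plan is to prove the two inequalities $\PP(R) \geq \sup_P \PP(R/P)$ and $\PP(R) \leq \sup_P \PP(R/P)$ separately, where $P$ ranges over prime ideals of $R$. The first inequality is essentially formal: chains of primes in $R/P$ pull back bijectively (and order-isomorphically) to chains of primes in $R$ lying above $P$, and this correspondence commutes with taking unions of subchains. Since $Q/P$ is prime in $R/P$ exactly when $Q$ is prime in $R$, a subchain in $R/P$ has non-prime union precisely when its preimage subchain in $R$ has non-prime union. Hence the index of a chain in $R/P$ equals the index of its preimage chain in $R$, so $\PP(R/P) \leq \PP(R)$ for every prime $P$, and taking the supremum gives one direction.

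For the reverse inequality, I would start with an arbitrary chain $\tilde P = \{P_\alpha\}$ of primes in $R$ and let $P = \bigcap_\alpha P_\alpha$. The key observation is that the intersection of a descending chain of primes is prime (this fact is invoked in the excerpt's remark on uniquely-\PP), so $P$ is a prime ideal of $R$. Then every $P_\alpha$ contains $P$, so the whole chain $\tilde P$ descends to a chain of primes $\{P_\alpha/P\}$ in $R/P$; by the pull-back correspondence above, the index of $\tilde P$ in $R$ equals the index of $\{P_\alpha/P\}$ in $R/P$, which is at most $\PP(R/P) \leq \sup_Q \PP(R/Q)$. Since $\tilde P$ was arbitrary, taking the supremum over all chains yields $\PP(R) \leq \sup_Q \PP(R/Q)$.

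The main point requiring care — and the step I expect to be the only real obstacle — is verifying that $P = \bigcap P_\alpha$ is genuinely prime when the chain is merely a directed (countable, per the paper's convention) descending family rather than a literal well-ordered sequence; here one uses that if $aRb \subseteq \bigcap P_\alpha$ but $a \notin \bigcap P_\alpha$ and $b \notin \bigcap P_\alpha$, then $a \notin P_\alpha$ and $b \notin P_\beta$ for some $\alpha, \beta$, and comparability of the chain lets one pass to a single $P_\gamma \subseteq P_\alpha \cap P_\beta$ with $a, b \notin P_\gamma$, contradicting primeness of $P_\gamma$. The other routine checks — that the ideal correspondence $R \leftrightarrow R/P$ is inclusion-preserving, carries primes to primes in both directions, and intertwines unions — are standard and I would state them without belaboring the details. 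One should also note the degenerate case where $R$ itself has no primes (the supremum is over the empty set, hence one adopts the convention that it is $0$, matching $\PP(R)$ which is then vacuously $0$).
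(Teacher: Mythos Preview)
Your proof is correct and follows essentially the same approach as the paper. The paper's argument is more compressed but rests on the same two ingredients: the ideal correspondence identifies $\PP(R/P)$ with the supremum of indices of chains of primes lying above $P$, and any chain of primes lies above some prime (namely its own intersection, which is prime), so the supremum over all such pairs recovers $\PP(R)$. Your extra discussion of the degenerate no-primes case and the verification that the intersection of a chain of primes is prime are details the paper simply takes for granted.
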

\begin{proof}
By definition $\PP(R/P)$ is the supremum of indices of chains of
primes containing $P$. Therefore, the supremum on the right-hand
side is the supremum of indices of chains of primes containing some
prime $P$, but any chain contains its own intersection, so this
supremum is by definition $\PP(R)$.
\end{proof}

\begin{rem}
If $n = \PP(R)$ then $R$ has a chain of $n$ \almostprime{s}. It is not clear if the converse holds. For example if $\set{P_\lam}$ and $\set{P_{\lam}'}$ are ascending chains of primes such that $\bigcup P_{\lam} \subset \bigcup P_{\lam}'$ are not primes, does it follow that there is a chain of primes with at least two non-prime unions?
\end{rem}

We claim that:
\begin{prop}\label{PPindex}
For any ring $R$,
$$\PPind(R) = \left\{ \begin{array}{cl} 0 & i\!f\mbox{\, $R$\, has the property\, \PP} \\ \sup_I \PPind(R/I)+1 & \mbox{otherwise} \end{array} \right.$$
where the supremum is taken over the \almostprime\ ideals of $R$
(when they exist).
\end{prop}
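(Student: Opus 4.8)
The plan is to treat the case when $R$ has \PP\ directly, and, when \PP\ fails, to prove the two inequalities $\PPind(R)\ge\sup_I\PPind(R/I)+1$ and $\PPind(R)\le\sup_I\PPind(R/I)+1$. If $R$ has \PP, every subchain of a chain of primes is again a chain of primes and hence has prime union, so every chain has index $0$ and $\PPind(R)=0$. From now on assume that \PP\ fails; then \almostprime\ ideals of $R$ exist, since the union of a chain witnessing the failure of \PP\ is one.

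The technical heart is a structural observation about a fixed chain of primes $\tilde P=\{P_\alpha\}$: the set $\mathcal U(\tilde P)$ of non-prime unions of nonempty subchains of $\tilde P$ is \emph{linearly ordered by inclusion}. Indeed, if $U_1=\bigcup S_1$ and $U_2=\bigcup S_2$ with $a\in U_1\setminus U_2$, $b\in U_2\setminus U_1$, pick $P\in S_1$ and $Q\in S_2$ with $a\in P$, $b\in Q$; since $P,Q$ are comparable, one of $a\in U_2$, $b\in U_1$ must hold, a contradiction. The same comparability argument yields two further facts: (a) each $U\in\mathcal U(\tilde P)$ satisfies $U=\bigcup\{P_\alpha\subsetneq U\}$, so $U$ is a union of a chain of primes and, being non-prime, is an \almostprime\ ideal; and (b) if $P_\alpha\not\subseteq U$ then $U\subsetneq P_\alpha$. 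Combining (a) and (b), whenever $U\subsetneq U'$ in $\mathcal U(\tilde P)$, splitting $\{P_\alpha\subsetneq U'\}$ according to whether $P_\alpha\subseteq U$ shows that $U'=\bigcup\{P_\alpha:U\subsetneq P_\alpha\subsetneq U'\}$, a union of a nonempty subchain of primes of $R$ all containing $U$; hence $U'/U$ is a non-prime union of a subchain of the chain $\{P_\alpha/U:U\subseteq P_\alpha\}$ of primes of $R/U$. Since distinct members of $\mathcal U(\tilde P)$ lying above $U$ have distinct images, this gives $\PPind(R/U)\ge|\{U'\in\mathcal U(\tilde P):U\subsetneq U'\}|$.

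For the inequality $\PPind(R)\ge\sup_I\PPind(R/I)+1$, fix an \almostprime\ ideal $I=\bigcup_\beta Q_\beta$ (with $\{Q_\beta\}$ a chain of primes and $I$ not prime) and a chain $\{\bar P_\gamma\}$ of primes of $R/I$ of index $d$. Lift it to a chain $\{P_\gamma\}$ of primes of $R$ each containing $I$ (hence $\supsetneq I$), and form the concatenated chain $\{Q_\beta\}\cup\{P_\gamma\}$ of primes of $R$; its non-prime subchain-unions include $I$ together with the $d$ ones inherited from $\{\bar P_\gamma\}$ (each strictly containing $I$, so distinct from $I$), whence its index is $\ge d+1$ and $\PPind(R)\ge d+1$. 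As $d$ runs over the indices of chains of primes of $R/I$, this yields $\PPind(R)\ge\PPind(R/I)+1$, and taking the supremum over $I$ gives the inequality. For the reverse inequality, let $n\le\PPind(R)$ be a natural number; then some chain $\tilde P$ has index $\ge n$, so $|\mathcal U(\tilde P)|\ge n$. Choosing $n$ members of $\mathcal U(\tilde P)$ and letting $U$ be the smallest of them (possible as $\mathcal U(\tilde P)$ is a chain), $U$ is an \almostprime\ ideal and the displayed inequality gives $\PPind(R/U)\ge n-1$, so $\sup_I\PPind(R/I)+1\ge n$; letting $n\to\PPind(R)$ completes the argument.

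The step I expect to be the main obstacle --- and the reason for isolating the fact that $\mathcal U(\tilde P)$ is a chain --- is the case $\PPind(R)=\infty$. A single chain $\tilde P$ may have infinitely many non-prime subchain-unions arranged decreasingly (an $\omega^*$-pattern, with no smallest element), so there is no canonical \almostprime\ ideal to split off and no single quotient $R/U$ that is forced to have infinite $\PPind$. The argument above avoids this: it never passes to one fixed quotient, only to the fact that for each finite $n$ there is \emph{some} \almostprime\ $U$ with $\PPind(R/U)\ge n-1$, obtained by taking $U$ to be the minimum of $n$ suitably chosen members of $\mathcal U(\tilde P)$ and counting those above it. Two bookkeeping conventions are used throughout: ``subchain'' is read as ``nonempty subchain'' (so the empty union does not pollute the count), and the correspondence between primes of $R/U$ and primes of $R$ containing $U$ is used to transfer (non-)primality of subchain-unions.
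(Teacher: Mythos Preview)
Your proof is correct and follows the same strategy as the paper: concatenate a chain witnessing $I$ as \almostprime\ with a lifted chain from $R/I$ to get $\PPind(R)\ge\PPind(R/I)+1$, and pass to the quotient by the smallest non-prime subchain-union to get the reverse inequality. The paper's argument is much terser and silently assumes the key structural fact you isolate and prove---that the non-prime subchain-unions of a fixed chain are linearly ordered---so your version actually fills in details (the linearity of $\mathcal U(\tilde P)$, facts (a) and (b), the concatenation construction) that the paper leaves implicit; your uniform ``for each finite $n$'' treatment also handles the infinite case more cleanly than the paper's case split.
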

\begin{proof}
Indeed, $\PPind(R) = 0$ if and only if there are no \almostprime{}s
(which are non-prime by definition), if and only if $R$ satisfies
$\PP$. Now assume $R$ does not satisfy~\PP. Consider the set
$\set{\PPind(R/I)}$ ranging over the \almostprime{}s $I$.  If this set is unbounded, then clearly $\PPind(R) = \infty$. Otherwise, take a \almostprime{} $I$ such
that $n = \PPind(R/I)$ is maximal among the \PP-indices of the
quotients. If $J_1/I \subset \cdots \subset J_n/I$ are
\almostprime{}s in a chain of primes in $R/I$, then $I \subset J_1
\subset \cdots \subset J_n$ are \almostprime{s} in a chain in $R$.
On the other hand if $J_0 \subset J_1 \subset \cdots \subset J_n$
are \almostprime{s} in a chain in $R$, then $\PPind(R/J_0) \geq n$.
\end{proof}
For example, $\PPind(R) = 1$ if and only if the union of an
ascending chain of primes starting from a \almostprime\ ideal is
necessarily prime.

\section{The property $\PP$ in PI-rings}\label{sec:PI}

In this section we show that for PI-rings, the $\PP$-index is
bounded by the PI-class.

\begin{prop}\label{Azumaya_CP}
Any Azumaya algebra satisfies \PP (and \SPSP).
\end{prop}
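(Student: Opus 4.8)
The plan is to reduce the chain condition for an Azumaya algebra $R$ to the commutative case, using the fact that prime ideals of $R$ correspond bijectively to prime ideals of its center $C = \Zent(R)$ via $P \mapsto P \cap C$ and $\mathfrak{p} \mapsto \mathfrak{p}R$, and that this correspondence respects inclusions and arbitrary unions. Concretely: an Azumaya algebra $R$ over $C$ is a finitely generated projective faithful $C$-module on which $R \tensor_C R^{\mathrm{op}} \to \End_C(R)$ is an isomorphism; the standard structure theory (the ``Azumaya correspondence'', often attributed to Auslander–Goldman) gives that $I \mapsto I \cap C$ is an inclusion-preserving bijection from the set of (two-sided) ideals of $R$ to the set of ideals of $C$, with inverse $\mathfrak{a} \mapsto \mathfrak{a}R$, and that $R/\mathfrak{a}R$ is Azumaya over $C/\mathfrak{a}$. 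Under this bijection, primeness of $I$ in $R$ corresponds to primeness of $I \cap C$ in $C$ (indeed $R/I$ is Azumaya over the domain $C/(I\cap C)$, hence prime).

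First I would record the two facts just cited: the ideal correspondence is a lattice isomorphism, and it matches prime ideals on both sides. Since $R$ is in particular a finite $C$-module, \Pref{first} already applies and gives \PP\ directly; but for a clean self-contained argument I would instead note that if $P_1 \subset P_2 \subset \cdots$ is a chain of primes in $R$, then $\mathfrak{p}_i := P_i \cap C$ is a chain of primes in $C$, and because the correspondence is a lattice isomorphism (so in particular commutes with directed unions: $\bigcup P_i = \bigl(\bigcup \mathfrak{p}_i\bigr)R$), one has $\bigcup P_i = \mathfrak{p}R$ where $\mathfrak{p} = \bigcup \mathfrak{p}_i$. Since $C$ is commutative, $\mathfrak{p}$ is prime, hence $\mathfrak{p}R$ is prime in $R$ by the correspondence. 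This gives \PP. For \SPSP, the same argument works verbatim with ``semiprime'' in place of ``prime'': a semiprime ideal of $R$ is an intersection of primes, the correspondence sends intersections to intersections, so $I$ is semiprime in $R$ iff $I \cap C$ is semiprime in $C$; a union of a chain of semiprime ideals of the commutative ring $C$ is semiprime, and we transport back through the correspondence.

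The only real point requiring care is the claim that the ideal correspondence commutes with directed unions, equivalently that $\bigl(\bigcup \mathfrak{p}_i\bigr)R = \bigcup (\mathfrak{p}_i R)$. This is immediate: $\bigcup(\mathfrak{p}_i R) \subseteq \bigl(\bigcup\mathfrak{p}_i\bigr)R$ always, and conversely any element of $\bigl(\bigcup\mathfrak{p}_i\bigr)R$ is a finite sum $\sum c_k r_k$ with $c_k \in \bigcup \mathfrak{p}_i$, hence all $c_k$ lie in a single $\mathfrak{p}_n$ (the chain is ascending), so the element lies in $\mathfrak{p}_n R$. Thus the union on the $R$-side is $\mathfrak{p}R$ for $\mathfrak{p} = \bigcup \mathfrak{p}_i$, and there is nothing left to prove once the Azumaya ideal correspondence and its compatibility with primeness are invoked. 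I would flag the invocation of the Azumaya correspondence as the one external ingredient, and otherwise the proof is a two-line transport argument.
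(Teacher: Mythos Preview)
Your proposal is correct and follows essentially the same route as the paper: both invoke the standard ideal correspondence for Azumaya algebras (an inclusion-preserving bijection between ideals of $R$ and ideals of its center $C$, respecting primality and semiprimality) and then reduce to the commutative case. You supply more detail than the paper's two-line proof---in particular the verification that the correspondence commutes with directed unions, and the side remark that \Pref{first} already yields \PP---but the underlying argument is the same.
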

\begin{proof}
Let $A$ be an Azumaya algebra over a commutative ring $C$. There is
a 1:1 correspondence between ideals of $A$ and the ideals of $C$,
preserving inclusion, primality and semiprimality. The claim follows
since the center satisfies \PP\ (and~\SPSP).
\end{proof}

Recall that by Posner's theorem (\cite{RowenPI}), a prime PI-ring $R$ is {\emph{representable}}, namely embeddable in a matrix algebra $\M[n](C)$ over a commutative ring $C$. The minimal such $n$ is the PI-class of $R$, denoted $\PIdeg(R)$.

Although PI-rings do not necessarily satisfy the property \PP, we
show
that the PI-class bounds the extent in which \PP\ may fail. 

We are now ready for our main positive result about PI-rings.

\begin{thm}\label{mainPI}
Let $R$ be a (prime) PI-ring. Then $\PPind(R) < \PIdeg(R)$.
\end{thm}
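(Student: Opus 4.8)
The plan is to prove the contrapositive chain-length estimate by induction on $\PIdeg(R)$, using the \PP-index recursion of \Pref{PPindex} together with the structure of \almostprime\ quotients. Concretely, if $R$ is a prime PI-ring of PI-class $n$ and $R$ satisfies \PP, then $\PPind(R) = 0 < n$ and we are done. Otherwise $R$ has a \almostprime\ ideal $I$, and by \Pref{PPindex} it suffices to show $\PPind(R/I) < n-1$, i.e. that every prime quotient of $R/I$ has PI-class at most $n-1$; then induction on the PI-class finishes the argument (the base case $\PIdeg(R)=1$ being the commutative case, where \PP\ holds).

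So the crux is: if $I = \bigcup P_\lambda$ is a union of an ascending chain of primes in a prime PI-ring $R$ of PI-class $n$, and $I$ is \emph{not} itself prime, then $\PIdeg(R/P) \le n-1$ for every prime $P \supseteq I$. First I would recall that the PI-class is detected by a central polynomial / Capelli-type identity: $R$ has PI-class $\le n$ iff the Capelli polynomial $c_{n+1}$ (or the appropriate identity $g_n$ from the theory of $\M[n]$) vanishes on $R$, and PI-class exactly $n$ means $c_n$ does not vanish on any nonzero ideal — more precisely, for a prime PI-ring $R$ of PI-class $n$, the ideal generated by the evaluations of $c_n$ is nonzero (it is essentially the trace ideal / the non-identities of $\M[n]$ that survive). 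Each $R/P_\lambda$ is a prime PI-ring, so $\PIdeg(R/P_\lambda) \le n$; if infinitely many (equivalently, cofinally many) of them had PI-class exactly $n$, I want to derive that $R/I$ — or rather $R/P$ for $P \supseteq I$ — still has PI-class $n$, contradicting non-primality of $I$ in the following sense.

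Here is the mechanism I expect to use. Fix a prime $P \supseteq I = \bigcup P_\lambda$ and suppose toward a contradiction that $\PIdeg(R/P) = n$. Then $c_n$ does not vanish on $R/P$, so there is an evaluation $c_n(\bar r_1,\dots) \ne 0$ in $R/P$; lifting, $c_n(r_1,\dots) \notin P$, hence $c_n(r_1,\dots) \notin P_\lambda$ for all $\lambda$. Now for a prime PI-ring of PI-class $n$, an element $c$ which is a nonzero value of $c_n$ has the property that $cRc$ generates an Azumaya algebra over its center upon central localization — this is the Artin–Procesi / Rowen machinery ($c_n$-identities cut out the Azumaya locus). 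The key structural input I would invoke is that if $a = c_n(r_1,\dots)$ is a value of the Capelli/central polynomial in $R$, then in $R/P_\lambda$ this $a$ lies in the Azumaya locus, and the localized Azumaya algebra satisfies \PP\ by \Pref{Azumaya_CP}; combining this across the chain lets us conclude that the chain $\{P_\lambda\}$, localized at a suitable central element, sits inside an Azumaya algebra, so its union stays prime there — and descends to show $I$ is prime after all. Thus no prime $P \supseteq I$ can have PI-class $n$, giving $\PPind(R/I) \le \PPind$ of rings of PI-class $\le n-1$, which by induction is $< n-1$, so $\PPind(R) \le n-1 < n$ by \Pref{PPindex}.

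The main obstacle will be making precise and rigorous the claim that a value of the $n$-th Capelli (or relevant central) polynomial being nonzero modulo $P$ forces PI-class $\le n-1$ on \emph{every} earlier quotient, and especially the localization bookkeeping that transfers \PP\ from the Azumaya locus back to $I$. In particular one must be careful that the same central element $c$ works simultaneously for the whole chain (it does, since $c \notin P_\lambda$ for all $\lambda$), and that inverting $c$ is compatible with taking the union of the chain — central localization is exact and commutes with direct limits, and by the remark in \Sref{sec:intro} the class of \PP-rings is closed under central localization, so this should go through. I would also need the fact that $\PIdeg$ cannot drop and then be recovered: if $\PIdeg(R/P_\lambda) < n$ cofinally then $\PIdeg(R/I) < n$ trivially, whereas if $\PIdeg(R/P_\lambda) = n$ cofinally the Azumaya-locus argument applies; either way the conclusion $\PPind(R/I)$ is governed by PI-class strictly smaller than $n$, completing the induction.
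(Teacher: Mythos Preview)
Your proposal is correct and follows essentially the same route as the paper: assume a prime $Q\supseteq \bigcup P_\lambda$ has PI-class $n$, take a nonzero value $\gamma$ of the central polynomial $g_n$ not in $Q$ (hence not in any $P_\lambda$), localize at $\gamma$ to land in an Azumaya algebra, and invoke \Pref{Azumaya_CP} to contradict non-primality of the union. The only cosmetic difference is that the paper works directly with the central polynomial $g_n$ rather than the Capelli polynomial you initially mention (and your final dichotomy on $\PIdeg(R/P_\lambda)$ is unnecessary, since $\gamma\notin Q$ already handles the whole chain at once).
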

\begin{proof}
Let $R$ be a prime PI-ring of PI-class $n$. If the PI-class is $1$
then $R$ is commutative, and has $\PPind(R) = 0$. We continue by
induction on $n$. Let $$0 = P_0 \subset P_1 \subset \cdots$$ be an
ascending chain of primes, and assume that $\bigcup P_n$ is not a
prime ideal. Let $Q \supset \bigcup P_n$ be a prime ideal. We want
to prove that the PI-class of~$R/Q$ is smaller than that of $R$.

Assume otherwise. Let $g_n$ be a central polynomial for $n \times n$
matrices (see~\cite[p.~26]{RowenPI}). Since $\PIdeg(R/Q) = n$, there
is a value $\gamma \neq 0$ of $g_n$ in the center of $R$, which is
not in $Q$. Since the center is a domain we can consider the
localization $A[\gamma^{-1}]$ (see~\cite[Section~2.12]{R-RT}), which
is Azumaya by Rowen's version of the Artin-Procesi Theorem \cite[Theorem 1.8.48]{RowenPI}, since
$1$ is a value of $g_n$ on this algebra. But then the union of $$0
\subset P_1[\gamma^{-1}] \subset P_2[\gamma^{-1}] \subset \cdots$$
is prime by \Pref{Azumaya_CP}, so $\bigcup P_n$ is prime as well,
contrary to assumption.
\end{proof}


We now show the bound is tight. Notice that the ring constructed in \Eref{Ex4} has PI-class $2$ and is not \PP (and thus has $\PPind(R) = 1$). Let us generalize this.

\begin{exmpl}[An algebra of PI-class $n$ which has \PP-index $n-1$]\label{PI_non_CP}
Let $A_{(n)}=F\left[\lambda_i^{(j)} : 1 \leq j < n,\, i =
1,2,\dots\right]$. Let $M_n = 0$ and, for $j = n-1,n-2,\dots,1$,
take
$M_j=M_{j+1}+\left<\lambda_1^{(j)},\lambda_2^{(j)},\cdots\right>$,
so that $0 = M_n \subset M_{n-1} \subset \cdots \subset M_1 \normali A_{(n)}$. Let
$e_{ij}$ denote the matrix units of the matrix algebra over $A_{(n)}$. Let $J_{(n)} =
\sum_{i,j} e_{ij}M_{\max(i,j)-1}$, $S_{(n)} = \sum e_{ii} A_{(n)}$.
Let
$$R_{(n)} = J_{(n)}+S_{(n)} =\begin{pmatrix}
A_{(n)} & M_1 & M_2 & \cdots & M_{n-1} \\
M_1 & A_{(n)} & M_2 & \cdots & M_{n-1} \\
M_2 & M_2 & A_{(n)} & \cdots & M_{n-1} \\
\vdots & \vdots & \vdots & \ddots & \vdots \\
M_{n-1} & M_{n-1} & M_{n-1} & \cdots & A_{(n)} \\
\end{pmatrix}.$$

Clearly $S_{(n)} J_{(n)}, \, J_{(n)} S_{(n)} \sub J_{(n)}$, and
$S_{(n)}S_{(n)}= S_{(n)}$. Moreover $M_{k}M_{\ell} \sub
M_{\max\set{k,\ell}}$ for every $k,\ell$, so that $J_{(n)}J_{(n)}
\sub J_{(n)}$. It follows that $R_{(n)} = J_{(n)}+S_{(n)}$ is a
ring. The ring of central fractions of $R_{(n)}$ is the simple ring
$\M[n](\operatorname{q}(A_{(n)}))$, so $R_{(n)}$ is prime, of
PI-class $n$.

When $n = 2$ we obtain the ring of \Eref{Ex4}, so $\PPind(R_{(2)}) =
1$. For arbitrary $n$, consider the chain $I_1 \subset I_2 \subset
\cdots$ of ideals of $A$ defined by $I_i =
\ideal{\lam_1^{(n-1)},\dots,\lam_i^{(n-1)}}$; thus $\bigcup I_i =
M_{n-1}$. Let $\tilde{I}_i = \M[n](I_i)$. Each ideal $\tilde{I}_i$
is prime (again by central fractions), and their union is the set of
matrices over $M_{n-1}$. The quotient ring is therefore
$R_{(n)}/\bigcup_i \tilde{I}_i \,\isom\, R_{(n-1)} \times
A_{(n-1)}$, which is not prime, and $\PPind(R_{(n-1)}) = n-2$ by
induction. We conclude that $\PPind(R_{(n)}) = n-1$.
\end{exmpl}

\begin{rem}\label{end}
\begin{enumerate}
\item\label{AffinePIhasACCsp} Although PI-rings do not necessarily satisfy \PP
(see \Eref{Ex4}), affine PI-rings over commutative Noetherian rings
do satisfy ACC on semiprime ideals, and in particular are \SPSP\ and
\PP (by Schelter's theorem, \cite[Thm~4.4.16]{RowenPI}).
\item PI-rings of finite Gel'fand-Kirillov dimension satisfy ACC on
primes, since a prime PI-ring is Goldie, and then every prime
ideal contains a regular element which reduces the dimension.
\item On the other hand, we have examples of a (non-affine) locally nilpotent monomial algebra, which does not satisfy \PP, and of affine algebras of $\GK=2$ which do not satisfy \PP (details will appear elsewhere.) 
    In both cases the \PP-index is uncountable.
\item Graded affine algebras of quadratic growth and graded affine domains of cubic growth have finite Krull dimension and so satisfy \PP, \cite{BS,GLSZ}.
\end{enumerate}
\end{rem}

\iffurther
\newpage
\section{Further ideas}

(This section will be omitted from the submitted paper).

\subsection{Normalizing extensions}

The most general version:

If $S$ is a subring of $R$ and there is a finite $S$-module $M$ such
that $aR \subseteq Ma$ for all $a\in R$, then the P-index of $R$
does not the P-index of exceed $S$.

This should cover all the cases that we know. If what I wrote is true,
I suggest that Beeri write down a short exposition of this, together
with the examples about PI-rings and submit it on the ArXiv in
parallel to our article,
so that nobody from the outside jumps in with the result (which is not
difficult to discover given the thrust of the article).

\begin{prop}\label{first.1}
Every ring $R$ which is a finite normalizing extension 
satisfies \PP.
\end{prop}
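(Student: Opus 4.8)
The plan is to reduce the problem to the base ring by way of the prime‑ideal theory of finite normalizing extensions (Lying Over, Going Up, Incomparability, and the finiteness of the fibre, as developed by Heinicke--Robson and Bit-David--Robson). Write $R=\sum_{i=1}^{t}Sx_i$ with $x_iS=Sx_i$ for all $i$, where the base subring $S$ is assumed to satisfy \PP; since a central subring is commutative and commutative rings satisfy \PP, this genuinely generalizes \Pref{first}, but note that the central case admits the direct argument of \Pref{first} while the normalizing case seems to require the structure theory. We may assume $1=x_1$, so $S\sub R$. Fix an ascending chain of primes $P_1\sub P_2\sub\cdots$ in $R$ with $P=\bigcup_nP_n$; we must show $P$ is prime.

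The ingredients I would use, all standard for finite normalizing extensions, are: (i) for a prime $P'\normali R$ the contraction $P'\cap S$ is semiprime in $S$, and the set of primes of $S$ minimal over it is nonempty and finite (of size at most $t$); (ii) Lying Over and Going Up for the relation ``$Q$ is a minimal prime of $S$ over $P'\cap S$'', written $P'$ lies over $Q$; (iii) Incomparability: two \emph{comparable} primes of $R$ lying over the same prime of $S$ coincide. The argument then proceeds by two compactness steps. First, choose (by Zorn) a prime $Q\normali S$ minimal over $P\cap S=\bigcup_n(P_n\cap S)$. For each $n$, $Q\supseteq P_n\cap S$, hence $Q$ contains a minimal prime of $S$ over $P_n\cap S$; the finite increasing sequences $Q_1\sub\cdots\sub Q_n$ of such minimal primes with every $Q_i\sub Q$ form an infinite, finitely branching tree, so by K\"onig's lemma there is an infinite branch $Q_1\sub Q_2\sub\cdots$ with $Q_n$ minimal over $P_n\cap S$. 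Since $S$ satisfies \PP, $\bigcup_nQ_n$ is prime; it lies between $P\cap S$ and $Q$, so by minimality $\bigcup_nQ_n=Q$. Second, for each $n$, Going Up applied to $P_n$ (which lies over $Q_n\sub Q$) yields a prime of $R$ containing $P_n$ and lying over $Q$; the set of such primes is a nonempty subset of the finite set $\{P'\normali R\suchthat Q\text{ is minimal over }P'\cap S\}$, these subsets decrease with $n$, so their intersection is nonempty, producing a single prime $P^{*}\supseteq P$ of $R$ lying over $Q$.

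It remains to deduce $P=P^{*}$, which finishes the proof since $P^{*}$ is prime. One has $P\sub P^{*}$, both contracting into $Q$ with $Q$ minimal over each contraction; choosing a prime $P'$ of $R$ minimal over $P$ with $P'\sub P^{*}$, one checks that $Q$ is still minimal over $P'\cap S$, so $P'$ and $P^{*}$ are comparable primes of $R$ lying over the same prime $Q$ of $S$, whence $P'=P^{*}$ by Incomparability, i.e.\ $P^{*}$ is minimal over $P$. The hard part --- the step I expect to be the genuine obstacle --- is upgrading this to $P=P^{*}$: one must show that $P$ has no prime strictly above it still lying over $Q$, equivalently that $P\cap S=P^{*}\cap S$ and that $P$ is pinned down by its contraction. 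This is precisely where ``passing to minimal primes does not commute with unions'' bites, and where one has to exploit the finiteness of the extension more carefully --- for instance by bounding, uniformly in $n$, how the finitely many primes of $R$ over $Q_n$ can branch, in the spirit of the PI bound of \Tref{mainPI}. A secondary technical nuisance is that the maps $s\mapsto\sigma_i(s)$ determined by $x_is=\sigma_i(s)x_i$ need not be well-defined ring endomorphisms when the $x_i$ are zero divisors, which is why I would rely on the established theory of normalizing extensions rather than argue by hand as in \Pref{first}.
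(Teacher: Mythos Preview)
Your approach is a detour around a two-line direct argument. You write that ``the central case admits the direct argument of \Pref{first} while the normalizing case seems to require the structure theory'', but this is exactly backwards: the \emph{only} property of a central subring used in the proof of \Pref{first} is $aC=Ca$ for all $a\in R$, and that is precisely the normalizing hypothesis. The paper's proof is verbatim the proof of \Pref{first}: write $R=\sum_{i=1}^{t}Cr_i$ with $aC=Ca$; if $aRb\sub P=\bigcup P_n$, then
\[
aRb \;=\; a\Bigl(\sum_i Cr_i\Bigr)b \;=\; \sum_i (aC)r_ib \;=\; \sum_i C(ar_ib),
\]
so each of the finitely many elements $ar_ib$ lies in $P$, hence all of them lie in a single $P_n$, and then $aRb=\sum_i C(ar_ib)\sub P_n$ forces $a\in P_n$ or $b\in P_n$. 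No hypothesis whatsoever on $C$ is needed --- in particular $C$ need not satisfy \PP\ --- and none of Lying Over, Going Up, Incomparability, finiteness of fibres, or K\"onig's lemma enters.

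By contrast, your route is not merely heavier but genuinely incomplete, as you yourself flag: from ``$P^{*}$ is a minimal prime over $P$'' you cannot conclude $P=P^{*}$ without already knowing $P$ is prime, and nothing in your argument rules out $P\subsetneq P^{*}$. The structural theory you invoke controls \emph{primes} of $R$ over primes of $S$, but $P$ is not yet known to be prime, so it slips through the cracks of LO/GU/INC. The moral is that the required compactness lives already inside $R$: the finite generating set $\{r_1,\dots,r_t\}$ turns $aRb$ into a finitely generated left $C$-module, and that is all one needs.
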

\begin{proof}
Write $R = \sum_{i=1}^t Cr_i$ where $R$ normalizes $C$ (namely $aC = Ca$ for every $a \in R$). Suppose $P_1 \subset P_2 \subset \cdots$ is a chain of prime
ideals, with $P = \cup P_i$. If $a,b \in R$ with $$\sum C ar_i b = \sum aCr_i b = aRb \subseteq P$$
then there is $n$ such that $ar_i b \in P_n$ for $1 \le i \le t,$
implying $aRb = \sum aCr_i b \subseteq P_n$, and thus $a \in P_n$ or $b \in P_n$.
\end{proof}

\begin{rem}\label{first+}
Let $R$ be a normalizing extension of $C$, and $Q \normali R$ prime. Then $Q \cap C$ is prime.

Indeed, assume $aCb \sub Q \cap C$. Then $aRb = aCRb = aCbR \sub Q$ so $a \in Q$ or $b \in Q$.
\end{rem}
\begin{prop}
Assume $R$ is a finite normalizing extension of a ring $C$. Assume $R$ is PI. Then $C$ (and $R$) satisfy \PP.
\end{prop}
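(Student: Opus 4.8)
The plan is to transfer \PP\ from $R$ down to $C$ by lifting chains of primes from $C$ into $R$, taking their union there, and intersecting back. That $R$ itself satisfies \PP\ is exactly \Pref{first.1}, which needs no PI hypothesis; one can also note that $C$, being a subring of the PI-ring $R$, is PI, so its $\PP$-index is at any rate finite by \Tref{mainPI} (applied to its prime quotients), but we will in fact show it is $0$.

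The one external ingredient is Lying Over for a finite normalizing extension $R = \sum_{i=1}^t C r_i$: every prime $\mathfrak p \normali C$ has the form $Q \cap C$ for a prime $Q \normali R$. This is where the PI assumption is used — in the PI setting Lying Over (and hence Going Up) over a suitable subring is classical, compare the discussion of the central case via \cite{BV} in \Sref{sec:mat} — and making it precise in the generality needed is the main point to secure. Going Up then follows formally: if $\mathfrak p \sub \mathfrak p'$ are primes of $C$ and $Q \normali R$ lies over $\mathfrak p$, then $R/Q$ is a finite normalizing extension of the image of $C$, that is, of $C/\mathfrak p$, so Lying Over applied to $\mathfrak p'/\mathfrak p$ produces a prime $Q' \supseteq Q$ of $R$ lying over $\mathfrak p'$.

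Granting this, the argument is short. Given a countable chain of primes $\mathfrak p_1 \sub \mathfrak p_2 \sub \cdots$ of $C$ with union $\mathfrak p$, lift $\mathfrak p_1$ by Lying Over and apply Going Up repeatedly to get a chain of primes $Q_1 \sub Q_2 \sub \cdots$ of $R$ with $Q_n \cap C = \mathfrak p_n$ for all $n$. By \Pref{first.1} the union $Q := \bigcup_n Q_n$ is prime in $R$; by \Rref{first+}, $Q \cap C$ is prime in $C$; and $Q \cap C = \bigcup_n (Q_n \cap C) = \bigcup_n \mathfrak p_n = \mathfrak p$. Hence $\mathfrak p$ is prime, so $C$ satisfies \PP, and the ``and $R$'' clause is \Pref{first.1} again. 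The only genuine obstacle is the Lying Over input; if one wished to avoid it, one could instead imitate the proof of \Tref{mainPI} for a hypothetical non-prime union in $C$ — reducing modulo it, inverting a central value of a central polynomial of $R$ to reach an Azumaya algebra, and applying \Pref{Azumaya_CP} — though I expect that route to be more cumbersome and less transparent.
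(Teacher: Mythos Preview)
Your proof is correct and follows essentially the same route as the paper: lift the chain in $C$ to a chain of primes in $R$ via Lying Over/Going Up, use \Pref{first.1} to conclude the union in $R$ is prime, and intersect back with $C$ using \Rref{first+}. You are somewhat more explicit than the paper in deriving Going Up from Lying Over by passing to quotients, but the substance is identical.
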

\begin{proof}
The claim for $R$ is \Pref{first.1}.

Let $P_n$ be an ascending chain of ideals in $C$. By the LO for PI rings, there is a chain $Q_n$ in $R$ such that each $P_n$ is minimal prime over $Q_n \cap C$, and since $Q_n \cap C$ is prime by the remark, $P_n = Q_n \cap C$. But $\bigcup Q_n$ is prime since $R$ is a finite normalizing extension (\Pref{first.1}), so $\bigcup P_n = \bigcup (Q_n \cap C) = (\bigcup Q_n) \cap C$ is prime by the \Rref{first+}.
\end{proof}

\subsection{Comments on \Sref{sec:example2.2}}

((This is only needed if more details are required above))

\begin{itemize}
\item In [Hungerford, 1967], he defines the free product for augmented algebras. So the definition depends on the augmentation, and one needs to be extra careful about this, because $F(y)$ has no natural augmentation. It seems best to take $F[x] = F \oplus xF[x]$ and $F(y) = F \oplus \span_F \set{\frac{f}{g} \suchthat (f,g) = 1, fg \not \in \mul{F}}$. Finally $F[x] *_F F(y)$ is well defined.
\item We could extend $\nu$ to $F[x] *_F F(y)$ by taking $\nu()$ of a monomial to be the maximal $\nu(a)$ for $xax$ within the monomial; and $\nu()$ of a sum of monomials to be the minimal $\nu()$ of a monomial. Since $\nu(xa_1xa_2\cdots a_kx) = \max(\nu(a_1),\dots,\nu(a_k))$, we have that $\nu(xa_1xa_2\cdots a_kx) \geq -n$ iff for some $a_i$ we have $\nu(a_i) \geq -n$, iff some factor $xa_i x \in P_n$. Which is fine. In this definition of $\nu()$ on monomials we ignore initial or terminal coefficients from $F(y)$. So for example $\nu(axbx) = \nu(b)$, regardless of $a$. Likewise $\nu(\sum w_i) = \min(\nu(w_i))$ where $w_i$ are monomials; so $\nu(\sum w_i) \geq -n$ iff $\nu(w_i) \geq -n$ for each $i$, iff $w_i \in P_n$ for each $i$; which is fine again, because we want $f \in P_n$ iff $\nu(f) \geq -n$ for every $f \in R$. Of course there is still a problem of defining everything; what are monomials, etc.
\item Write $F[x] = F \oplus A$ and $F(y) = F\oplus B$. As in [Hungerford, 1967], define $T_n$ and $T_n'$ to be the tensor products $A \tensor B \tensor \cdots$ and $B \tensor A \tensor \cdots$ of length $n$, respectively. The free product is, by definition, $F[x]*_FF(y) = F\oplus \bigoplus(T_n \oplus T_n')$. We suppress the tensor notating for elements. Since $A = \bigoplus Fx^i$, every simple tensor can be written as a sum of {\bf{monomials}}, which have the form
$x^{i_0}a_1x^{i_1}a_2x^{i_3}a_3\cdots x^{i_{t-1}}a_{t}x^{i_t}$, where $t \geq 0$, $i_0, i_t \geq 0$, $i_1,\dots,i_{t-1} > 0$ and $a_1,\dots,a_t \in B$. The components $a_i$ of a monomial are uniquely determined up to (balanced) multiplication by a nonzero scalar. The presentation of an element as a sum of monomials is not unique, but the sum of monomials of any given {\bf{signature}} $(t;i_0,\dots,i_t)$ -- is. Every monomial can be uniquely written in the form
$w = x^{j_0}a_1 x a_2 x a_3\cdots x a_{\ell}x^{j_\ell}$, where $\ell \geq 0$, $j_0, j_t \in \set{0,1}$,
and $a_1,\dots,a_\ell \in B \cup F$ (the number of appearances of $x$ in $w$ is $\ell+j_0+j_{\ell}-1$ if $\ell > 0$, or $j_0$ for $\ell = 0$).
\end{itemize}
Let us now compute $P_n$. By definition,
$$P_n = \begin{cases} \sum_{a \in B, \nu(a) \geq -n} R xax R & n < 0 \\ P_n = \sum_{a \in B, \nu(a) \geq -n} R xax R + RxxR & n \geq 0\end{cases}.$$

\begin{prop}
$P_n$ is monomial (in the sense that if it contains an element, it contains all of its monomials).
\end{prop}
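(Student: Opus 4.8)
The plan is to reduce the statement to a monomial description of ideals of the form $RxWxR$, in the spirit of the computation in the proof of \Tref{main}. First I would recall (as in \Cref{almostexample} and \Eref{6.4}) that $P_n$ is by definition the ideal $RxW_nxR$, where $W_n=\set{a\in F(y)\suchthat\nu(a)\ge-n}$ is an $F$-subspace of $F(y)$; for $n\ge0$ one has $F\sub W_n$ (so $RxxR=RxFxR\sub RxW_nxR$, which accounts for the extra generator), while for $n<0$ no nonzero constant has value $\ge-n$, and in fact $W_n\sub\mathfrak m$, the maximal ideal of the valuation ring of $\nu$.

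Next I would set up an adapted monomial basis of $R$. Since $F\cap\mathfrak m=0$, one may fix a vector-space complement $C_0$ of $F$ in $F(y)$ with $\mathfrak m\sub C_0$ (set $C_0=\mathfrak m\oplus V$ for any complement $V$ of $F\oplus\mathfrak m$ in $F(y)$); then for every $n$ we get $W_n=(W_n\cap F)\oplus(W_n\cap C_0)$. The subspaces $W_n\cap C_0$ form an ascending chain whose union is $C_0$, because $\bigcup_n W_n=F(y)$; hence we may fix a basis $\mathcal C$ of $C_0$ for which $\mathcal C\cap W_n$ is a basis of $W_n\cap C_0$ for every $n$. Writing $R=F[x]*_FF(y)$ as in \Sref{sec:example2.2}, the elements $x^{i_0}c_1x^{i_1}\cdots c_tx^{i_t}$ with $t\ge0$, all $c_j\in\mathcal C$, $i_0,i_t\ge0$ and $i_1,\dots,i_{t-1}\ge1$ then form an $F$-basis of $R$: the signature $(t;i_0,\dots,i_t)$ labels a graded summand, and within it these monomials form a basis, indexed by $\mathcal C^{\otimes t}$. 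These are the monomials with respect to which the claim will be proved.

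The remaining argument is short. Since $R$ is the $F$-span of the basis monomials, $P_n=RxW_nxR=\sum_{g,g'}gxW_nxg'$, the sum over all pairs $g,g'$ of basis monomials; it therefore suffices to show that each $gxW_nxg'$ is a span of basis monomials. By the choice of $C_0$ and $\mathcal C$, the space $W_n$ is the $F$-span of $\mathcal C\cap W_n$ when $n<0$, and of $(\mathcal C\cap W_n)\cup\set{1}$ when $n\ge0$; hence $gxW_nxg'$ is spanned by the elements $gxexg'$ with $e\in\mathcal C\cap W_n$, together with $gx^2g'$ in case $n\ge0$. If $g=x^{i_0}c_1\cdots c_sx^{i_s}$ and $g'=x^{j_0}d_1\cdots d_ux^{j_u}$, then concatenation, simplified only by $x^ix^j=x^{i+j}$, gives $gxexg'=x^{i_0}c_1\cdots c_sx^{i_s+1}\,e\,x^{1+j_0}d_1\cdots d_ux^{j_u}$ when $e\in\mathcal C$, and $gx^2g'=x^{i_0}c_1\cdots c_sx^{i_s+2+j_0}d_1\cdots d_ux^{j_u}$; in either case this is a scalar multiple of a single basis monomial. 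Thus every $gxW_nxg'$, and hence $P_n$, is spanned by the monomials it contains, i.e.\ is monomial.

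The only genuine difficulty is the interaction of the valuation with the free-product decomposition: one must take the complement $C_0$ large enough (containing $\mathfrak m$) that each $W_n$ splits compatibly, and then adapt the basis $\mathcal C$ to the flag $\set{W_n\cap C_0}$. Once these choices are fixed, the rest is bookkeeping with signatures. (For $n\ge0$ the same conclusion also follows directly from the displayed computation in the proof of \Tref{main}, which already exhibits $RxW_nxR$ as $L'$ together with coordinate subspaces of the graded pieces $L_k$.)
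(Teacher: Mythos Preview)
The paper does not actually prove this proposition; it is merely stated (in the section of working notes following the displayed formula for $P_n$), so there is no paper argument to compare against. Your proof is correct and supplies precisely the missing details. The two choices you make are the right ones: taking the complement $C_0$ with $\mathfrak m\subseteq C_0$ so that $W_n=(W_n\cap F)\oplus(W_n\cap C_0)$ holds uniformly for all $n$ (including $n<0$), and then fixing a basis $\mathcal C$ of $C_0$ adapted to the flag $\{W_n\cap C_0\}_{n\in\mathbb Z}$, so that each generator $gx e xg'$ (or $gx^2g'$) collapses to a single basis monomial after concatenation.

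One remark on interpretation. In the paper's setup, ``monomials'' are elements $x^{i_0}a_1x^{i_1}\cdots a_tx^{i_t}$ with the $a_j$ ranging over all of $B$; these span $R$ but are not linearly independent, so ``contains all of its monomials'' is ambiguous as written (for instance $xcx$ with $c\in W_n$ can be rewritten as $xax+xbx$ with $a,b\notin W_n$). Your passage to an honest adapted basis $\mathcal C$ is what makes the statement precise, and in that sharper form it also underpins the next proposition (the characterization of which monomials lie in $P_n$).
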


\begin{prop}
A monomial is in $P_n$ iff it has $xax$ as a subword where $\nu(a) \geq -n$.
\end{prop}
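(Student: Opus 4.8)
The plan is to show that $P_n$ equals the $F$-span of the monomials of the desired shape, by a two-sided inclusion, and then to read off the claim from the linear independence of monomials. Write $\mathcal M_n$ for the set of monomials of $R = F[x] *_F F(y)$ that contain a subword $xax$ with $\nu(a) \geq -n$, where $a$ ranges over $B \cup \mul{F}$ under the convention $\nu(c) = 0$ for $c \in \mul{F}$ (so that a subword $x^2 = x\cdot 1\cdot x$ qualifies precisely when $n \geq 0$), and set $V_n = \span_F \mathcal M_n$. Since the normal forms of monomials form an $F$-basis of $R$, once we know $P_n = V_n$ the proposition follows immediately: a monomial lies in $P_n$ iff it lies in $\mathcal M_n$. (This also re-proves that $P_n$ is a monomial ideal.)

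First I would verify $V_n \sub P_n$. If a monomial $w$ contains $xax$ as a subword, then $w = w_1\,(xax)\,w_2$ for suitable monomials $w_1, w_2$; this factorization is legitimate because assembling the three pieces in $R$ merely coalesces the boundary powers of $x$ (using $x^p\cdot x = x^{p+1}$ in the $F[x]$-factor) and, if $a \in \mul{F}$, pulls out the scalar, which is exactly how $w$ is built from its letters. Hence $w \in RxaxR \sub P_n$ when $a \in B$ with $\nu(a) \geq -n$, and $w \in RxxR \sub P_n$ when $a$ is a scalar and $n \geq 0$. Thus every monomial of $\mathcal M_n$ lies in $P_n$, and $V_n \sub P_n$.

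Next I would verify $P_n \sub V_n$. By definition $P_n$ is the sum of the ideals $RxaxR$ over $a \in B$ with $\nu(a) \geq -n$, together with $RxxR$ when $n \geq 0$; hence an element of $P_n$ is an $F$-combination of products $w_1\cdot xax\cdot w_2$ (and $w_1\cdot x^2\cdot w_2$) with $w_1, w_2$ monomials and $\nu(a) \geq -n$. When such a product is put into normal form, the distinguished factor $a$ ends up flanked by powers of $x$ contributed by the boundaries of $w_1$ and $w_2$, and it is never absorbed, since it is adjacent only to occurrences of $x$ and not to other $F(y)$-factors; so the resulting monomial still contains the subword $xax$ with $\nu(a) \geq -n$ (in the scalar case one gets two or more consecutive $x$'s, i.e. a subword $xx$, with $n \geq 0$). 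Therefore each such product lies in $V_n$, so $P_n \sub V_n$. Combining the two inclusions, $P_n = V_n$, and by linear independence of the normal-form monomials a monomial $w$ belongs to $V_n$ exactly when $w \in \mathcal M_n$; this is the assertion.

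The main obstacle is not the inclusions themselves but the groundwork this section still owes: pinning down precisely what a monomial of $F[x] *_F F(y)$ and its normal form are, and verifying once and for all that multiplication acts on normal forms only by coalescing adjacent powers of $x$ and adjacent $F(y)$-scalars while leaving everything else untouched. With that in hand both inclusions are routine. A little extra care is needed at the outer positions $x^{i_0}, x^{i_t}$ of a monomial, where a boundary factor that is not flanked by $x$ becomes internal only after a multiplication, and with the convention $\nu(c) = 0$ for $c \in \mul{F}$ that makes the scalar case $RxxR$ fit the uniform statement.
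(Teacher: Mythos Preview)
The paper does not actually supply a proof of this proposition: it sits in the informal ``Further ideas'' appendix (explicitly marked as not for distribution), stated bare alongside the companion claim that $P_n$ is a monomial ideal, both left as computations the authors never wrote out. So there is nothing to compare your argument against.

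That said, your approach is sound and is exactly what one would do. The two inclusions $V_n \subseteq P_n$ and $P_n \subseteq V_n$ are straightforward once the normal-form machinery is in place, and your key observation --- that in a product $w_1 \cdot (xax) \cdot w_2$ the letter $a \in B$ is flanked only by $x$'s, so it survives intact and the result is a single monomial still containing $xax$ --- is correct and is the whole point. The honest caveat you flag at the end (that the section has not yet fixed what a monomial of $F[x] *_F F(y)$ is, nor verified the basic multiplication-of-normal-forms calculus) is precisely why the authors left these propositions unproved: the missing infrastructure is the real content, and you have correctly identified that your inclusions are routine \emph{modulo} that groundwork. Your handling of the $RxxR$ summand via the convention $\nu(c)=0$ for $c\in\mul{F}$ also matches the paper's second presentation of monomials, where the intermediate letters are allowed to lie in $B\cup F$.
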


We extend the definition of $\nu \co F(y) \ra \Z$ to $F[x]*_F F(y)$ as follows. For a monomial, we set
$$\nu(x^{j_0}a_1 x a_2 x a_3\cdots x a_{\ell}x^{j_\ell}) = \begin{cases}\max\set{\nu(a_2),\dots,\nu(a_{\ell-1})} & j_0 = 0, j_{\ell} = 0
\\ \max\set{\nu(a_2),\dots,\nu(a_{\ell})} & j_0 = 0, j_{\ell} = 1
\\ \max\set{\nu(a_1),\dots,\nu(a_{\ell-1})} & j_0 = 1, j_{\ell} = 0
\\ \max\set{\nu(a_1),\dots,\nu(a_{\ell})} & j_0 = 1, j_{\ell} = 1
\end{cases}$$

\subsection{Proofs for Section~\ref{sec:mat}}

Recall that for ideals $I,M$ in a commutative ring, $(I:M) = \set{x \in A\suchthat xM \sub I}$.
\begin{prop}\label{compactness}
Let $A$ be a commutative ring, with an ideal $M \normali A$. For any semiprime $I \normali A$,
$$M \cap (I:M) = M \cap I.$$
\end{prop}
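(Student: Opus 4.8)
The plan is to prove the set equality $M \cap (I:M) = M \cap I$ by showing both inclusions, the nontrivial direction relying on the semiprimeness of $I$. First I would note that the inclusion $M \cap I \subseteq M \cap (I:M)$ is immediate: if $x \in I$ then $xM \subseteq IM \subseteq I$, so $x \in (I:M)$; intersecting with $M$ gives the claim. This direction does not even need $I$ to be semiprime.

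For the reverse inclusion $M \cap (I:M) \subseteq M \cap I$, take $x \in M \cap (I:M)$. Then $x \in M$ and $xM \subseteq I$ by definition of the idealizer; since $x \in M$ as well, we get $x^2 \in xM \subseteq I$. Because $I$ is semiprime (and the ring is commutative, so the ideal $(x)$ satisfies $(x)^2 \subseteq I$), it follows that $x \in I$, hence $x \in M \cap I$. This completes both inclusions.

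I do not expect any real obstacle here; the only point requiring care is invoking the definition of semiprime correctly in the commutative setting, namely that $x^2 \in I$ forces $x \in I$ for a semiprime ideal $I$ (equivalently, $A/I$ has no nonzero nilpotents). Everything else is formal manipulation of the idealizer, and the argument is essentially one line once the semiprimeness is applied.
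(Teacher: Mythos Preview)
Your proof is correct and matches the paper's argument essentially line for line: the easy inclusion comes from $I \subseteq (I:M)$, and the nontrivial one from $x \in M \cap (I:M) \Rightarrow x^2 \in xM \subseteq I \Rightarrow x \in I$ by semiprimeness.
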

\begin{proof}
The inclusion $I \sub (I:M)$ is trivial. In the other direction let $x \in M \cap (I:M)$, then $x^2 \in xM \sub I$, so $x \in I$ by assumption.
\end{proof}

\begin{lem}\label{prep}
Let $A$ be a commutative ring with an ideal $M \normali A$. Let $I,J \sub M$ be ideals of $A$ such that $I$ is semiprime, $M I\sub J$ and $M J\sub I$.

\begin{enumerate}
\item\label{x1} For $b \in M$, $b^2 M \sub J$ iff $b \in I$.

\item\label{x4}
The condition ``For every $b \in M$, if $b^2M \sub J$, then $b \in J$" is equivalent to $M \cap I \sub J$.

\item\label{x3} Let $I'$ be another semiprime ideal such that $MI' \sub J$ and $MJ \sub I'$. Then $M \cap I = M \cap I'$.

\end{enumerate}
\end{lem}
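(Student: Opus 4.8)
The plan is to derive all three parts from \Pref{compactness}, which identifies $M\cap(I:M)$ with $M\cap I$ for a semiprime ideal $I$; the only part carrying any real content is \eq{x1}, and \eq{x4} and \eq{x3} will then follow formally.

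First I would establish \eq{x1}. Fix $b\in M$. If $b\in I$, then $b^{2}\in MI\sub J$, so $b^{2}M\sub J$ because $J$ is an ideal. For the converse, suppose $b^{2}M\sub J$; since $A$ is commutative the set $bM$ is an ideal, and $(bM)^{2}=b^{2}M^{2}\sub(b^{2}M)M\sub JM\sub I$, using $MJ\sub I$. As $I$ is semiprime this forces $bM\sub I$, that is $b\in(I:M)$; together with $b\in M$ and \Pref{compactness} we obtain $b\in M\cap(I:M)=M\cap I\sub I$.

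Next, \eq{x4}: by \eq{x1}, for $b\in M$ the condition ``$b^{2}M\sub J$'' is equivalent to ``$b\in I$'', so the quoted implication simply asserts that every $b\in M\cap I$ lies in $J$, i.e.\ $M\cap I\sub J$; the converse direction is the same reading of \eq{x1}. Finally, \eq{x3}: the pair $(I',J)$ satisfies exactly the hypotheses imposed on $(I,J)$ in the lemma (namely $I'$ is semiprime, $MI'\sub J$ and $MJ\sub I'$), so \eq{x1} applies to it verbatim; hence for $b\in M$ one has $b\in I\iff b^{2}M\sub J\iff b\in I'$, and therefore $M\cap I=M\cap I'$.

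I do not expect a genuine obstacle. The two small points that need care are that $bM$ is indeed an ideal (so that semiprimeness of $I$ may be applied to the inclusion $(bM)^{2}\sub I$), and the symmetric bookkeeping of the hypotheses $MI\sub J$ and $MJ\sub I$ when transporting \eq{x1} from $I$ to $I'$ in \eq{x3}.
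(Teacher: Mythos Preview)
Your argument is correct and matches the paper's proof essentially line for line: part \eq{x1} via $(bM)^2\sub MJ\sub I$ and \Pref{compactness}, part \eq{x4} as an immediate restatement of \eq{x1}, and part \eq{x3} by applying \eq{x1} symmetrically to $(I,J)$ and $(I',J)$. The only cosmetic difference is that the paper phrases \eq{x3} through $M\cap(I':M)$ rather than explicitly invoking \eq{x1} for $I'$, but the content is identical.
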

\begin{proof}
\begin{enumerate}
\item If $b^2 M \sub J$ then $(bM)^2 \sub JM \sub I$ so $bM \sub I$ and $b \in M \cap (I:M) \sub I$ by \Pref{compactness}. On the other hand if $b \in M \cap I$ then $b^2 \in MI \sub J$ and $b^2M \sub J$.
\item This is \eq{x1}.
\item Indeed, by \Pref{compactness} and \eq{x1}, $b \in M \cap I$ iff $b \in M \cap (I:M)$ iff ($b \in M$ and $b^2M \sub J$) iff $b \in M \cap (I':M)$ iff $b \in M \cap I'$.
\end{enumerate}
\end{proof}

\begin{prop}\label{main0FI}
\begin{enumerate}
\item\label{Y1} The ideals of $\hat{A}$ are the subsets $\hat{I} = \smat{I_{11}}{I_{12}}{I_{21}}{I_{22}}$, where for every $i,j$, $I_{ij} \normal A$ (not necessarily proper), $I_{ii'} \sub M$, and $MI_{ij} \sub I_{i'j} \cap I_{ij'}$ (where $1' = 2$ and $2' = 1$).

\item\label{Y3} $\hat{I}$ is semiprime iff
\begin{itemize}
\item $I_{11}$ and $I_{22}$ are semiprime, and
\item $M \cap I_{11} \sub I_{12} \cap I_{21}$;
\end{itemize}
iff
\begin{itemize}
\item $I_{11}$ and $I_{22}$ are semiprime, and
\item $I_{12} = I_{21} = M \cap I_{11} = M \cap I_{22}$.
\end{itemize}

\item\label{Y33} The semiprime ideals of $\hat{A}$ are of the form $\smat{I}{M\cap I}{M \cap I}{I'}$ where $I,I'$ are semiprime, and $M \cap I' = M \cap I$.

\item\label{Y7} The ring $\hat{A}$ satisfies \SPSP.
\end{enumerate}
\end{prop}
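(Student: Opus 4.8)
The plan is to prove the four parts in order: (Y1) supplies the structural dictionary for ideals of $\hat A$, (Y3) refines it to the semiprime case, (Y33) is merely a reformulation of (Y3), and (Y7) then follows formally. (All of this also repackages \Pref{whoissp} and \Pref{main0}, already proved above, so one may alternatively just cite those; I describe a self-contained route.)

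For (Y1) I would isolate matrix entries. For an ideal $\hat I\normali\hat A$ put $I_{ij}=\set{a\in A\suchthat a\,e_{ij}\in\hat I}$; then $\hat I=\set{\hat x\suchthat x_{ij}\in I_{ij}\text{ for all }i,j}$, because $\hat x=\sum_{i,j}e_{ii}\hat x\,e_{jj}$ with each summand in $\hat I$. Multiplying on either side by the diagonal copy of $A$ shows each $I_{ij}\normal A$, and $I_{12},I_{21}\sub M$ since $a\,e_{ij}\in\hat A$ with $i\neq j$ already forces $a\in M$. Multiplying an element with $(i,j)$-entry $r\in I_{ij}$ on the left by $m\,e_{i'i}\in\hat A$ (legitimate as $m\in M$, $i\neq i'$) yields $mr\in I_{i'j}$, and on the right by $m\,e_{jj'}$ yields $mr\in I_{ij'}$; hence $MI_{ij}\sub I_{i'j}\cap I_{ij'}$. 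The converse --- that any such family assembles into a two-sided ideal --- is a direct entrywise check using exactly these containments.

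For (Y3) I would use that $\hat I$ is semiprime iff $\hat a\,\hat A\,\hat a\sub\hat I$ forces $\hat a\in\hat I$, and expand $\hat a=\sum a_{ij}e_{ij}$ with $a_{ij}\in A_{ij}$ (where $A_{ii}=A$ and $A_{ij}=M$ for $i\neq j$). Taking $\hat a$ supported on a single entry $(i,j)$ shows semiprimeness implies Condition $\COND$: for all $i,j$, $A_{ji}a_{ij}^2\sub I_{ij}$ implies $a_{ij}\in I_{ij}$; conversely $\COND$ for all $i,j$ gives semiprimeness, since for fixed $i,j$ the $(i,j)$-entries of $\hat a\,\hat A\,\hat a$ form $\sum_{k,\ell}a_{ik}A_{k\ell}a_{\ell j}$, which contains the subgroup $A_{ji}a_{ij}^2$ (the $(k,\ell)=(j,i)$ summand). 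For $i=j$, $\COND$ says precisely that $I_{ii}$ is semiprime. For $i\neq j$ I would prove directly that, for $b\in M$, $b^2M\sub I_{ij}$ holds if and only if $b\in I_{11}$: if $b^2M\sub I_{ij}$ then $b^4\in(bM)^2\sub I_{ij}M\sub I_{11}$ by (Y1), so $b\in I_{11}$ as $I_{11}$ is semiprime; and if $b\in M\cap I_{11}$ then $b^2M\sub MI_{11}\sub I_{ij}$, again by (Y1). Thus $\COND$ for $i\neq j$ is exactly $M\cap I_{11}\sub I_{ij}$, giving the first displayed equivalence. For the second, $I_{12}M\sub I_{11}$ gives $I_{12}\sub M\cap(I_{11}:M)=M\cap I_{11}$ by \Pref{compactness}; together with $M\cap I_{11}\sub I_{12}$ this forces $I_{12}=M\cap I_{11}$, and symmetrically $I_{21}=M\cap I_{11}$ and $I_{12}=I_{21}=M\cap I_{22}$, whence $M\cap I_{11}=M\cap I_{22}$. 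Part (Y33) is then (Y3) rewritten with $I=I_{11}$, $I'=I_{22}$.

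For (Y7), an ascending chain $T_1\sub T_2\sub\cdots$ of semiprime ideals of $\hat A$ has, by (Y33), $T_n=\smat{I_n}{M\cap I_n}{M\cap I_n}{I'_n}$ with $I_n,I'_n$ semiprime in $A$ and $M\cap I_n=M\cap I'_n$. Since the chain ascends, $\bigcup(M\cap I_n)=M\cap\bigcup I_n$ (and likewise for $I'_n$), so $\bigcup T_n=\smat{\bigcup I_n}{M\cap\bigcup I_n}{M\cap\bigcup I_n}{\bigcup I'_n}$ with $M\cap\bigcup I_n=M\cap\bigcup I'_n$; since $A$ is commutative, $\bigcup I_n$ and $\bigcup I'_n$ are semiprime, so (Y33) shows the union is semiprime. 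I expect the main obstacle to be the equivalence in (Y3): the matrix-unit reductions are mechanical, but turning the condition $A_{ji}a_{ij}^2\sub I_{ij}$ into a clean statement about $M\cap I_{11}$ rests both on the direct radical computation above and on the idealizer identity $M\cap(I:M)=M\cap I$ for semiprime $I$ (\Pref{compactness}), and one must keep careful track that it is semiprimeness of the corner $I_{11}$ itself, not merely of $\hat I$, that makes these steps work.
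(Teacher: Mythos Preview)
Your proposal is correct and follows essentially the same route as the paper: the entrywise description of ideals in (Y1), the reduction of semiprimeness to the single-entry condition $\COND$ via matrix units, the translation of $\COND$ for $i\neq j$ into $M\cap I_{11}\sub I_{ij}$ using the $b^4$ computation and the idealizer identity $M\cap(I{:}M)=M\cap I$, and the passage to (Y7) by reading off the form of the union from (Y33). The only cosmetic difference is that you spell out (Y1) explicitly whereas the paper dismisses it in one line, and you invoke symmetry directly for $M\cap I_{11}=M\cap I_{22}$ while the paper's version of this proposition phrases that step through \Lref{prep}.\eq{x3}; the content is identical.
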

\begin{proof}
\begin{enumerate}
\item Follows by computing the principal ideals generated by monomial matrices.
\item Write $A_{ij} = A$ if $i = j$ and $A_{ij} = M$ otherwise. Clearly $\hat{I}$ is semiprime if for every $a_{11} \in A_{11}$,..., $a_{22} \in A_{22}$, if $(\sum a_{ij}e_{ij})(\sum A_{rs}e_{rs})(\sum a_{k\ell}e_{k\ell}) \sub \sum I_{i\ell}e_{i\ell}$ then $a_{ij} \in I_{ij}$ for each $i,j$. In other words, if
\begin{center}
   (for every $i,\ell$, $\sum_{j,k} A_{jk} a_{ij}a_{k\ell} \sub I_{i\ell}$) implies (for every $i,\ell$, $a_{i\ell} \in I_{i\ell}$).
\end{center}
Assuming this is the case, fix $i,j$ and choose $a_{k\ell} = 0$ for every $(k,\ell) \neq (i,j)$; then
\begin{center}
$\COND \quad A_{ji} a_{ij}^2 \sub I_{ij}$ implies $a_{ij} \in I_{ij}$.
\end{center}
On the other hand if Condition $\COND$ holds and for every $i,\ell$, $\sum_{j,k} A_{jk} a_{ij}a_{k\ell} \sub I_{i\ell}$, then in particular $A_{ji} a_{ij}^2 \sub I_{ij}$ so each $a_{i\ell} \in I_{i\ell}$. Therefore, $\hat{I}$ is semiprime iff $\COND$ holds for every $i,j$.

Let us interpret Condition $\COND$. For $i = j$ it requires that $I_{ii}$ are semiprime. Assuming this is the case, for $i \neq j$ the condition is ``$Ma_{ij}^2 \in I_{ij}$ implies $a_{ij} \in I_{ij}$'', which in light of the standing assumption that $a_{ij} \in A_{ij}$, is equivalent by \Lref{prep}.\eq{x4} to $M \cap I_{11} \sub I_{ij}$.

Now assume that $I_{ii}$ are semiprime, and that $M \cap I_{11} \sub I_{12} \cap I_{21}$. Since $I_{12} M \sub I_{11}$, we have that $I_{12}\sub M \cap (I_{11}:M) = M \cap I_{11} \sub I_{12}$ so $I_{12} = M \cap I_{11}$ and likewise $I_{21} = M\cap I_{11}$. Finally the equality $M \cap I_{11} = M \cap I_{22}$ is \Lref{prep}.\eq{x3}.
\item Clear from \eq{Y3}, noting that $(M \cap I)M \sub I,I'$ and $MI,MI' \sub M\cap I$ by \eq{Y1}.
\item By \eq{Y3} every chain of semiprimes $T_1 \sub T_2 \sub \cdots$ in $\hat{A}$ has the form $T_n = \smat{I_n}{J_n}{J_n}{I'_n}$, $I_n$ and $I'_n$ are ascending chains of semiprimes, and $J_n = M \cap I_n = M \cap I_n'$. The union of this chain is $\smat{\bigcup I_n}{L}{L}{\bigcup I_n'}$ where $L = M \cap \bigcup I_n = M \cap \bigcup I_n'$. Again by \eq{Y3} the union is semiprime.
\end{enumerate}
\end{proof}

\begin{prop}\label{main0.1}
\begin{enumerate}
\item \label{Y4}
An ideal $\hat{I}$ (as in \Pref{main0}.\eq{Y1}) is prime iff
\begin{itemize}
\item $I_{11}$ and $I_{22}$ are prime,
\item $I_{12} = I_{21} = M \cap I_{11} = M \cap I_{22}$.
\item If $M \sub I_{ii}$ for some $i$, then $I_{jj} = A$ for some $j$.
\item $I_{11} \sub I_{22}$ or vise versa.
\end{itemize}

\item\label{Y5FI} The prime ideals of $\hat{A}$ are $\smat{J}{M}{M}{A}$ and $\smat{A}{M}{M}{J}$ for prime ideals $J \normali A$ containing $M$, and $I^{0} = \smat{I}{M \cap I}{M \cap I}{I}$ for prime ideals $I \normali A$ not containing $M$.

\item $\hat{A}$ is prime.

\item The almost prime ideals of $\hat{A}$ are of the form $\smat{M'}{M}{M}{M'}$ where $M' \normali A$ is a prime containing $M$, which can be presented as a union over an ascending chain of primes not containing $M$.
\end{enumerate}
\end{prop}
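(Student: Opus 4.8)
The plan is to deduce all four statements from an analysis of the quotient $\hat A/\hat I$, using the description of the ideals and of the semiprime ideals of $\hat A$ obtained above (\Pref{main0FI}, parts \eq{Y1} and \eq{Y3}). I would first settle the third claim: since $A$ is a domain, $\M[2](A)$ is prime, so given nonzero $a,b\in\hat A$ there is $X\in\M[2](A)$ with $aXb\neq 0$; choosing $0\neq c\in M$, the matrix $cX$ lies in $\smat{M}{M}{M}{M}\sub\hat A$ and $a(cX)b=c(aXb)\neq 0$, so $a\hat A b\neq 0$ and $\hat A$ is prime. (Equivalently, inverting the nonzero elements of the central subring $\{\smat{a}{0}{0}{a}\co a\in A\}\isom A$ gives $\M[2](\operatorname{q}(A))$, which is simple.) The same argument works for any domain and any nonzero ideal, and I will reuse it below.

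For the forward implication of part (1): a prime ideal is semiprime, so \Pref{main0FI}.\eq{Y3} already gives that $I_{11},I_{22}$ are semiprime and $I_{12}=I_{21}=M\cap I_{11}=M\cap I_{22}$. To upgrade $I_{11}$ to a prime ideal, note that if $ab\in I_{11}$ then $\smat{a}{0}{0}{0}\,\hat A\,\smat{b}{0}{0}{0}$ consists of the matrices $\smat{abc}{0}{0}{0}$, $c\in A$ (here commutativity of $A$ is used), all of which lie in $\hat I$; hence $a\in I_{11}$ or $b\in I_{11}$, and symmetrically $I_{22}$ is prime. Now I split on whether $M\sub I_{11}$. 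If $M\sub I_{11}$, then $M\cap I_{11}=M$, so the equality of the two intersections gives $M\sub I_{22}$ as well; thus $I_{12}=I_{21}=M$ and $\hat A/\hat I\isom A/I_{11}\times A/I_{22}$, which is prime only when one factor vanishes and the other is prime --- this is conditions (iii)--(iv). If $M\nsub I_{11}$ --- hence also $M\nsub I_{22}$, by the equality of intersections --- then $MI_{22}\sub M\cap I_{22}=M\cap I_{11}\sub I_{11}$ together with primeness of $I_{11}$ forces $I_{22}\sub I_{11}$, and symmetrically $I_{11}\sub I_{22}$, so $I_{11}=I_{22}$; in particular (iv) holds and (iii) is vacuous.

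For the converse of part (1): the four conditions make $\hat I$ an ideal and include the hypotheses of \Pref{main0FI}.\eq{Y3}, so $\hat I$ is semiprime, and it remains to check primeness of $\hat A/\hat I$. If $M\sub I_{11}$, then as above $\hat A/\hat I\isom A/I_{11}\times A/I_{22}$, and (iii) combined with ``$I_{11},I_{22}$ prime'' makes this isomorphic to a quotient domain of $A$ (the two corners cannot both be $A$, else $\hat I=\hat A$); so $\hat I$ is prime. If $M\nsub I_{11}$, then (iv) and the equality $M\cap I_{11}=M\cap I_{22}$ force $I_{11}=I_{22}$ exactly as before, so $\hat I=I^0$ with $I:=I_{11}$ a prime not containing $M$; then $\hat A/\hat I\isom\smat{\bar A}{\bar M}{\bar M}{\bar A}$ with $\bar A=A/I$ a domain and $\bar M=(M+I)/I\neq 0$, and this is prime by the first paragraph. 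Part (2) is then the explicit list produced by (1): a proper prime with some corner equal to $A$ must be $\smat{J}{M}{M}{A}$ or $\smat{A}{M}{M}{J}$ with $J\normali A$ a prime containing $M$, and any other proper prime is $I^0=\smat{I}{M\cap I}{M\cap I}{I}$ with $I\normali A$ a prime not containing $M$.

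For part (4): let $\hat P_1\sub\hat P_2\sub\cdots$ be a chain of primes of $\hat A$ whose union $\hat P=\bigcup\hat P_n$ is not prime. If some $\hat P_n$ has a corner equal to $A$, say $\hat P_n=\smat{J}{M}{M}{A}$, then every later term has $A$ in its lower-right corner, hence by part (2) is of the form $\smat{J_m}{M}{M}{A}$ with $J_m\normali A$ a prime containing $M$; so $\hat P=\smat{\bigcup J_m}{M}{M}{A}$, and $\bigcup J_m$ is a prime of $A$ containing $M$ (the union of a chain of primes in the commutative ring $A$), making $\hat P$ prime by part (2) --- contradiction. Hence every $\hat P_n=I_n^0$ with $I_n\normali A$ a prime and $M\nsub I_n$; since $\bigcup(M\cap I_n)=M\cap\bigcup I_n$ we get $\hat P=(\bigcup I_n)^0$, and $M':=\bigcup I_n$ is prime in $A$, so by part (2) $\hat P=(M')^0$ fails to be prime precisely when $M\sub M'$, giving $\hat P=\smat{M'}{M}{M}{M'}$. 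Conversely, any prime $M'\normali A$ with $M\sub M'$ that is a union of an ascending chain of primes $I_n$ with $M\nsub I_n$ yields the ascending chain of primes $I_n^0$ of $\hat A$ with non-prime union $\smat{M'}{M}{M}{M'}$. I expect the main obstacle to be the bookkeeping in part (1): the listed conditions do not literally assert $I_{11}=I_{22}$ away from the degenerate locus, so one must see that they force it (via $MI_{22}\sub M\cap I_{11}\sub I_{11}$ and primeness of $I_{11}$), and one has to handle carefully the improper cases $I_{ii}=A$, where ``prime'' must be read in the degenerate sense.
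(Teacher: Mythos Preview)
Your proof is correct and complete. The route you take differs from the paper's in two places worth noting. First, you establish part~(3) directly at the outset (via $\M[2](A)$ prime and multiplication by a nonzero $c\in M$), and then recycle this in the converse of part~(1): once the four conditions force $I_{11}=I_{22}=:I$ in the non-degenerate case, the quotient $\hat A/\hat I$ has the same shape as $\hat A$ over the domain $A/I$, so it is prime by the argument you already gave. The paper instead proves part~(1) first by a direct element-chase (assuming the primeness criterion fails for some $a_{ij},a'_{k\ell}$ and deriving a contradiction through several cases), and only afterwards reads off part~(3) as the case $\hat I=0$. Second, for the fourth bullet in the forward direction you use the ideal-theoretic inclusion $MI_{22}\sub M\cap I_{22}=M\cap I_{11}\sub I_{11}$ together with primeness of $I_{11}$ to get $I_{22}\sub I_{11}$ (and symmetrically), yielding the stronger conclusion $I_{11}=I_{22}$; the paper instead exhibits explicit witnesses $ae_{11},a'e_{22}$ with $(ae_{11})\hat A(a'e_{22})\sub\hat I$. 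Your structural reduction is cleaner and makes part~(2) immediate, while the paper's hands-on verification avoids the forward reference to part~(3); both are valid, and your remark about reading ``prime'' in the degenerate sense when $I_{ii}=A$ is exactly the bookkeeping needed to reconcile part~(1) with the list in part~(2).
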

\begin{proof}
\begin{enumerate}
\item As in \Pref{main0}.\eq{Y3}, $\hat{I}$ is prime if for every $a_{11},a_{11}' \in A_{11}$,..., $a_{22},a_{22}' \in A_{22}$, the following condition holds:
\begin{center}
   ($\forall i,j,k,\ell$, $A_{jk} a_{ij}a'_{k\ell} \sub I_{i\ell}$) implies ($\forall i,\ell$, $a_{i\ell} \in I_{i\ell}$ or $\forall i,\ell$, $a_{i\ell}' \in I_{i\ell}$).
\end{center}
Assuming this is the case, fix $i,j$ and choose $a_{k\ell} = a'_{k\ell} = 0$ for every $(k,\ell) \neq (i,j)$; then
\begin{center}
$\COND' \quad A_{ji} a_{ij}a_{ij}' \sub I_{ij}$ implies $a_{ij} \in I_{ij}$ or $a'_{ij} \in I_{ij}$,
\end{center}
and in particular $I_{ii}$ is prime (for each $i$). Since $\hat{I}$ is semiprime, we also have that $I_{12} = I_{21} = M \cap I_{ii}$, as claimed. If, moreover, $M \sub I_{ii}$ for some $i$, then $I_{12} = I_{21} = M$ and $\hat{A}/\hat{I} = (A/I_{11}) \times (A/I_{22})$ so one of the components is zero. Finally suppose there are $a \in I_{22}$ and $a' \in I_{11}$ such that $a \not \in I_{11}$ and $a' \not \in I_{22}$. Then $(ae_{11})\hat{A}(a'e_{22}) = aa'Me_{12} \sub aI_{11}Me_{12}\sub aI_{12}e_{12} \sub \hat{I}$, whereas $ae_{11}, a'e_{22} \not \in \hat{I}$, a contradiction. This proves the fourth condition.

On the other hand, assume the four conditions hold. If $M \sub I_{ii}$ then we may assume $I_{ii} = A$ and $\hat{A}/\hat{I} = A/I_{i'i'}$ is prime. So we assume $M \nsub I_{ii}$ for $i = 1,2$.

Suppose $\forall i,j,k,\ell$, $A_{jk} a_{ij}a'_{k\ell} \sub I_{i\ell}$, but for some (fixed) $i,j,k,\ell$, $a_{ij} \not \in I_{ij}$ and $a'_{k\ell} \not \in I_{k\ell}$. We have that $A_{jk} a_{ij}a'_{k\ell} \sub I_{i\ell}$, contained in the prime ideals $I_{ii}, I_{\ell\ell}$ (whether or not $i = \ell$). But since $M \nsub I_{ii}, I_{\ell\ell}$, we must have $a_{ij}a'_{k\ell} \in I_{ii}, I_{\ell\ell}$.

We claim that $a_{ij} \not \in I_{ii}$. Indeed if $j = i$ this is the assumption, while if $j \neq i$ the claim follows since $a_{ij} \not \in I_{ij} = M \cap I_{ii}$. Likewise $a'_{k\ell} \not \in I_{\ell\ell}$. Therefore $a_{ij} \in I_{\ell\ell}$ and $a_{k\ell}' \in I_{ii}$.

Now, if $j \neq i$, then $a_{ij} \in M$, so $a_{ij} \in M \cap I_{\ell\ell} = I_{ij}$, a contradiction, so we may assume $i = j$, and likewise $k = \ell$. Since $a_{ij} \in I_{k\ell}$ but $a_{ij} \not \in I_{ij}$, we must have $i \neq k$, but $a_{ij} \in I_{k \ell}\setminus I_{ii}$ and $a_{k\ell}' \in I_{ij} \setminus I_{k\ell}$, contradicting the fourth condition.
\item Following the conditions given in \eq{Y4}, assume $I_{11} \sub I_{22}$. Then $I_{22}M \sub I_{22} \cap M = I_{11} \cap M \sub I_{11}$, so since $I_{11}$ is prime there are two options: either $I_{22} \sub I_{11}$, in which case $I_{22} = I_{11}$; or $M \sub I_{11}$, in which case $I_{22} = A$ by the third condition.
\item Zero is a prime ideal of $\hat{A}$ by \eq{Y4}.
\item If the chain of primes includes an ideal with $A$ in one of the corners, then every higher term has the same form, and the union is determined by the union of entries in the other corner, which is prime since $A$ is commutative. We thus assume the chain has the form $I_1^0 \sub I_2^0 \sub \cdots$ where $I_1 \sub I_2 \sub \cdots$ are primes in $A$, not containing $M$. The union is clearly $M'^0$ where $M' = \bigcup I_n$ is prime, and by \eq{Y5FI}, $M'^0$ is not a prime iff $M \sub M'$.
\end{enumerate}
\end{proof}

\subsection{Chain of primitive ideals}

It would be nice to find an example of a chain of primitive ideals with prime non primitive union.

This one does not work (the ideals $P_{ij}$ are not prime): In $F\sg{e,y}$ with $e$ idempotent define $I_{i,j} = \ideal{y^iey^j-y^jey^i}$. Then, taking lexicographic order we can make a chain $P_{ij} = \sum_{(i',j')\leq (i,j)} I_{i'j'}$. I didn't check details but the idea is that the corner is primitive (is it?) modulo every ideal in the chain but isomorphic to a polynomial ring (hence non primitive) modulo the union.

\subsection{P-index of a module}

(BTW, what about P-index of a module?)

\subsection{up and down}

We might also want to consider how the property lifts and descends.

\subsection{GU}

If $R \subset T$ satisfies GU and every prime ideal of $T$
intersects $R$ nontrivially, then \PP descends from $T$ to $R$. I 
think this holds for finite normalizing extensions, maybe finite
extensions in general by work of Letzter.

\subsection{Compactness}

(One could define `compactness' with respect to semiprime ideals: $M$ is compact if whenever it is contained in a union of a chain of semiprime ideals, it is contained in one of the ideals. Perhaps our ring $\hat{A}$ has non-semiprime chains iff $M$ is not compact?)

\subsection{From the section on matrices}

\begin{prop}[On left ideals]\label{main0.2FI}
Let $A$ be an integral domain, and $M$ an ideal. Let $\hat{A} = \smat{A}{M}{M}{A}$.
\begin{enumerate}
\item\label{z1} The left ideals of $\hat{A}$ are the subsets $\hat{I} = \smat{I_{11}}{I_{12}}{I_{21}}{I_{22}}$, where for every $i,j$, $I_{ij} \normal A$ (not necessarily proper), $I_{ii'} \sub M$, and $MI_{ij} \sub I_{i'j}$.
\item The maximal left ideals of $\hat{A}$ are of the form $\smat{I}{M}{M \cap I}{A}$ and $\smat{A}{M \cap I}{M}{I}$ for maximal ideals $I \normali A$.
\item[Proof.] In light of \eq{z1}, a maximal left ideal has the form $\smat{I}{M}{J}{A}$ for $I \normali A$ and $J \sub M$; or have the symmetric form. If $I$ is not maximal and $I \subset I'$, this is contained in $\smat{I'}{M}{J+MI'}{A}$, so we assume $I$ is maximal. But then $J^2 \sub MJ \sub I$ implies $J \sub I$ and since $J \sub M$, $\smat{I}{M}{J}{A} \sub \smat{I}{M}{I \cap M}{A}$, so we have an equality. Since ideals of the form $  \smat{I}{M}{I \cap M}{A}$ do not contain each other, and every maximal has this form, they are all maximal.
\item The Jacobson radical of $\hat{A}$ is $\smat{J(A)}{J(A) \cap M}{J(A) \cap M}{J(A)}$.
\item $\hat{A}$ is semiprimitive iff $A$ is semiprimitive.
\item\label{z5} $\hat{A}$ is not primitive.
\item[Proof.] Every maximal left ideal contains a (nonzero) two-sided ideal: for $\smat{I}{M \cap I}{M\cap I}{I} \sub \smat{I}{M}{M\cap I}{A}$ is an ideal.
\item The only primitive ideals of $\hat{A}$ are the maximal ones, which are of the forms: $\smat{J}{M}{M}{A}$ and $\smat{A}{M}{M}{J}$ for $J \normali A$ maximal; and $\smat{I}{M \cap I}{M\cap I}{I}$ where $I+M = A$.
\item[Proof.] Primitive PI-rings are simple. But here is a direct proof: For prime ideals of the form $\smat{J}{M}{M}{A}$ or $\smat{A}{M}{M}{J}$, the quotient is commutative, so the ideal is primitive if and only if it is maximal. For the prime ideals of the other form, $\smat{I}{M \cap I}{M \cap I}{I}$ where $I$ is a prime not containing $M$, the quotient is isomorphic to $\smat{A/I}{(M+I)/I}{(M+I)/I}{A/I}$. If $M+I = A$ this is a simple ring so the ideal is maximal; otherwise, it is of the same structure as $\hat{A}$ (since $0 \neq (M+I)/I \normali A/I$), so it is not primitive by \eq{z5}.
\end{enumerate}
\end{prop}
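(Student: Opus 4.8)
The plan is to read off all six assertions from an entry‑by‑entry analysis of $\hat A=\smat{A}{M}{M}{A}\sub\M[2](A)$, exactly as \Pref{whoissp} does for two‑sided ideals. I write $e_{ij}$ for the matrix units of $\M[2](A)$ and use $\hat A=Ae_{11}\oplus Me_{12}\oplus Me_{21}\oplus Ae_{22}$, with $1'=2$ and $2'=1$. For \eq{z1} I would, given a left ideal $\hat I$, let $I_{ij}$ be the set of $(i,j)$‑entries of its members; left‑multiplying $\hat I$ by $e_{ii}$ and by $Me_{i'i}$ shows each $I_{ij}\normal A$, that $I_{12},I_{21}\sub M$, and that $MI_{ij}\sub I_{i'j}$, while conversely any block with these properties is closed under $\hat A\cdot(-)$ and under addition.

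Next I would pin down the maximal left ideals: by \eq{z1} such an ideal has the shape $\smat{I}{M}{J}{A}$ (or its transpose) with $I\normal A$, $J\sub M$; since replacing $I$ by a strictly larger $I'$ and $J$ by $J+MI'$ still gives a left ideal, $I$ must be maximal, and then $J^{2}\sub MJ\sub I$ with $I$ prime forces $J\sub M\cap I$, so $\smat{I}{M}{J}{A}\sub\smat{I}{M}{M\cap I}{A}$. As no two ideals of the latter shape contain one another, these (and their transposes) are exactly the maximal left ideals. Intersecting the two families over all maximal $I\normal A$ then computes $J(\hat A)=\smat{J(A)}{J(A)\cap M}{J(A)\cap M}{J(A)}$, using $\bigcap_I(M\cap I)=M\cap J(A)$; hence $\hat A$ is semiprimitive iff $A$ is.

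For primitivity I would note that each maximal left ideal $\smat{I}{M}{M\cap I}{A}$ contains the \emph{two-sided} ideal $\smat{I}{M\cap I}{M\cap I}{I}$, which is nonzero for every maximal $I$ in the case of interest (when $M$ is a nonzero proper ideal, so $A$ is not a field). Thus every maximal left ideal has nonzero core, so $\hat A$ has no faithful simple module and is not primitive (alternatively: $\hat A$ is a prime PI ring that is not simple, e.g.\ $\smat{M}{M}{M}{M}$ is a proper nonzero ideal). For the primitive \emph{ideals} $Q$ I would pass to $\hat A/Q$ and split according to the prime list of \Pref{main0.1}: if $Q=\smat{J}{M}{M}{A}$ or its transpose, then $\hat A/Q\isom A/J$ is commutative, hence primitive iff $J$ is maximal; if $Q=\smat{I}{M\cap I}{M\cap I}{I}$ with $I$ prime and $M\nsub I$, then $\hat A/Q\isom\smat{A/I}{(M+I)/I}{(M+I)/I}{A/I}$ is again a ring of this shape over the domain $A/I$, with nonzero ideal $(M+I)/I$, so by the non‑primitivity just shown it is primitive precisely when $M+I=A$ and $A/I$ is a field, i.e.\ when $I$ is maximal with $M+I=A$ and $\hat A/Q\isom\M[2](A/I)$ is simple.

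I expect the main obstacle to be the one‑sided bookkeeping hidden in \eq{z1}: one must check that a left ideal of $\hat A$ is genuinely recovered from its four entry‑sets $I_{ij}$ — concretely, that from $\smat{p}{q}{r}{s}\in\hat I$ one can split off $pe_{11},\,qe_{12},\,re_{21},\,se_{22}\in\hat I$ using only \emph{left} multiplication. Everything downstream — exhaustiveness of the maximal‑left‑ideal list, hence the core computation and the primitive‑ideal classification — rests on this.
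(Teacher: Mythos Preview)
Your outline follows the paper's own argument almost verbatim: the paper proves (2), \eq{z5}, and the last item by exactly the moves you describe (enlarge $I$ to force it maximal, use $J^2\sub MJ\sub I$, exhibit the two–sided core $\smat{I}{M\cap I}{M\cap I}{I}$, and pass to the quotient $\smat{A/I}{(M+I)/I}{(M+I)/I}{A/I}$). The Jacobson radical and the semiprimitivity statement are not proved in the paper either; your intersection computation is the intended one.

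Your flagged ``main obstacle'' is a genuine gap, and it is a gap in the paper as well (this proposition sits in the unpublished draft section). Statement \eq{z1} is \emph{false} as written: take any nonzero $m\in M$ and form the cyclic left ideal
\[
L=\hat A\cdot\smat{1}{m}{0}{0}=\Bigl\{\smat{a}{am}{n'}{n'm}:a\in A,\ n'\in M\Bigr\}.
\]
The set of $(1,1)$-entries of $L$ is all of $A$, yet $\smat{1}{0}{0}{0}\notin L$ since $A$ is a domain and $m\neq0$; so $L$ is not a block ideal. Left multiplication by $e_{11},e_{22}$ separates the two rows of an element of $\hat I$, but nothing in $\hat A$ lets you separate the two entries within a row.

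Consequently the opening sentence of both your argument and the paper's (``in light of \eq{z1}, a maximal left ideal has the form $\smat{I}{M}{J}{A}$ or its transpose'') is unjustified. The downstream statements (2)--(6) are nonetheless correct; to repair the proof of (2), classify simple $\hat A$-modules directly. Writing $S=e_{11}S\oplus e_{22}S$, one shows each $e_{ii}S$ is a simple $A$-module, hence $\isom A/I$ for a single maximal $I$; if $M\sub I$ one of the summands vanishes, and if $M\nsub I$ then both are $\isom A/I$ with the $Me_{21}$-action giving the (unique up to scalar) surjection $M\twoheadrightarrow A/I$, whose kernel is $M\cap I$ since $M_I=A_I$. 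Computing the annihilator of a generator then yields exactly the two listed shapes of maximal left ideals, after which your (and the paper's) remaining arguments go through unchanged.
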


Questions:
\begin{itemize}
\item How do semiprime quotients look like?
\item Which semiprime ideals are semiprimitive?
\item Is the union over a semiprimitive chain necessarily semiprimitive?
\end{itemize}

\subsection{Stability}

Question: assuming $A$ is \PP, does $A[\lam]$ have \PP?

*Noncomm. Hilbert basis thm? A is CP (or ACC(primes)). Is the same true for A[X]?

\subsection{Proof that commutative has \PP}

Given an ascending chain of prime ideals in a commutatuve ring, the union is prime too (indeed, an ideal in a commutative ring is prime if and only if $ab\in I$ implies either $a\in I$ or $b\in I$, so if $ab\in \bigcup{Q_i}$ then for some $i$ we have $ab\in Q_i$, which is prime, hence we may assume $a\in Q_i\subseteq \bigcup{Q_i}$).

\subsection{Finite GK-dim}

We pose the following question.

\begin{ques}
Is there an affine ring of finite \GKdim\ which does not have \PP?
\end{ques}

Note that all quotient rings $R/P_n$ considered in our examples \ref{Ex1},\ref{Ex3},\ref{PI_non_CP} are large - in the sense that they have infinite \GKdim. On the other hand, many division rings (obviously \PP) contain a free subalgebra. It is reasonable therefore to ask whether there are natural restrictions on a ring in order to consist of a chain of prime ideals with non-prime union.

*An example of an affine algebra with GKdim=2 which has infinite P-index (based on an earlier construction in locally finite nil algebras. This is not *that* short, so might not suite; on the other hand, this could not hold an independent paper but it would be a shame to burry it.)

\subsection{locally primeness}

\newcommand\Lines[2]{{\mbox{$\begin{array}{c} #1\\ #2 \end{array}$}}}

Here are some properties of ideals $I$ ($\exists a_i$ marks finitely many, $P$ is always a prime). The rightmost one is called `locally prime' in \cite{TK}. (I didn't give the reverse implications much thought).

$$\xymatrix{
{} & \mbox{prime} \ar@{=>}[d] & {} & {} & {} \\
{} & \bigcup_{\mbox{chain}}\mbox{primes} \ar@{=>}[dr] & {} & {} & {} \\
{} & {}  & \Lines{\forall a_i \in I}{\exists P \sub I: a_i \in P} \ar@{=>}[dl] \ar@{=>}[dr] & {} & {} \\
{} & \bigcup \mbox{primes} = \Lines{\forall a \in I}{\exists P \sub I: a \in P} \ar@{=>}[dr] & {} & \Lines{\forall a_i \in I, b_j \not \in I}{\exists P: a_i \in P, b_j \not \in P} \ar@{=>}[dl] \ar@/_25pt/@{..>}[lluuu]|{quasi-commutative} & {} \\
{} & {}  & \Lines{\forall a \in I, b \not \in I}{\exists P: a \in P, b \not \in P} & {} & {} \\
}$$
In \cite{TK} they give examples of rings which are not quasi-commutative. We should check their examples for our `union of primes is prime'.

A ring is `spectral' if $\spec R$ is a spectral topological space (a property characterized by being the spectrum of some commutative ring, but which also has topological characterization). The space of locally prime ideals is spectral. So quasi-commutative implies spectral. Again, how does \PP\ fit in?

By \cite[Rem.~15]{TK}, $R$ is quasy-commutative iff:
\begin{center}
QC: For every $x,y$ there are $r_1,\dots,r_n$ such that if $x,y \not \in P$ then some $xr_iy \not \in P$.
\end{center}
Equivalently,
\begin{center}
QC: $\forall x,y$ there are $r_i$ such that if $\ideal{xr_1y,\dots,xr_ny} \sub P$ then $x\in P$ or $y \in P$.
\end{center}

\subsection{Weaker version of quasi-commutative}

What about the following *weaker* condition:
\begin{center}
QCF: $\forall x,y$ $\exists$ f.g. ideal $I$ such that for every $P$, $xIy \sub P$ implies $x\in P$ or $y \in P$
\end{center}

What does it imply?

\subsection{Is our chain condition the only way to violate quasi-commutativity?}

Let us then phrase `not quasi-commutative':
\begin{center}
$\sim{}QC$: For some $x,y$, for every finite set $r_1,\dots,r_n$, there is a prime $P$ such that all $xr_iy \in P$ but $x,y \not \in P$.
\end{center}

Reformulation:
\begin{center}
$\sim{}QC$: For some $x,y$, in every quotient of the form $\bar{R} = R/\ideal{xr_1y,\dots,xr_ny}$, there is a prime $\bar{P}$ such that $x,y \not \in \bar{P}$.
\end{center}

The challenge: prove that if $\sim{}QC$, then there is a chain of primes whose union is not prime.

Notice that if $x,y \not \in P$ then there is some $r$ such that $xry \not \in P$.

Let's see what the condition gives, for some $x,y$.
\begin{itemize}
\item There is $P_1$ such that $x,y \not \in P_1$.

So there is some $r_1$ such that $xr_1y \not \in P_1$.

\item There is $P_2$ such that $xr_1y \in P_2$ but $x,y \not \in P_2$. [So $P_2 \not \sub P_1$]

So there is some $r_2$ such that $xr_2y \not \in P_2$.

\item There is $P_3$ such that $xr_2y \in P_3$ but $x,y \not \in P_3$. [So $P_3 \not \sub P_1 \cup P_2$]

So there is some $r_3$ such that $xr_3y \not \in P_3$.

\item There is $P_4$ such that $xr_3y \in P_4$ but $x,y \not \in P_4$. [So $P_4 \not \sub P_1 \cup P_2 \cup P_3$]

So there is some $r_4$ such that $xr_4y \not \in P_4$.

\item ...
\end{itemize}

The key issue is the following property:

\begin{center}
(**) For every prime $P$ and $a\not \in P$, $x,y \not \in \ideal{P,a}$, there is a prime $Q \supset P$ such that $a \in Q$ and $x,y \not \in Q$.
\end{center}

If this is granted, we can construct our chain. Note that passing to the quotient, what we ask is this (in a prime ring):
\begin{center}
(**) For every $a \neq 0$, $x,y \not \in \ideal{a}$, there is a prime $Q$ such that $a \in Q$ and $x,y \not \in Q$.
\end{center}

Passing to the quotient modulo $\ideal{a}$, what we ask is this (in an arbitrary ring):
\begin{center}
(**) For every $x,y \neq 0$, there is a prime $Q$ such that $x,y \not \in Q$.
\end{center}
(Take $Q$ maximal with respect to not containing $x,y$? the monoid generated by $x,y$?)

Compare and consider the condition:
\begin{center}
(*) For every $x \neq 0$, there is a prime $Q$ such that $x \not \in Q$.
\end{center}
(obviously, even in a commutative ring this would require $x$ to be non-nil. So are we asking too much?)

\subsection{Primes are f.g.}

* Consider the class of rings where primes are f.g.

\subsection{Neocommutativity}

In \cite{FS} it is mentioned that ACC(ideals) implies `neocommutativity' (the product of f.g. ideals is f.g.; the notion is due to Kaplansky, unpublished).

\subsection{LO, GU etc.}

* Consider comparison properties such as LO, GU, GD, INC with respect to \PP.

\subsection{\PP-index}

* Concerning the "CP dimension": we want $\PPind(A) \leq \PPind(A/I) + \PPind(I)$. What about $\PPind(A\otimes B)$?

* We can ask questions like catenarity on the \PP-index.

\subsection{regular rings}

(Remark: it seems that $\End(V)$ (vN-regular) have all ideals prime and principal, so in particular \PP\ holds. )

\subsection{Finite module over center}

* Question: suppose $A$ is a finite module over its center. Does it satisfy \PP?

\subsection{Strongly primes}

* The union of a chain of strongly prime is always strongly prime.

\subsection{An example to check}

(This example does not hold).

*An example (of something) in finite GK. Take the ring spanned - as a module - by all subwords of $xy^2xy^4xy^8\cdots$ and obtain a chain of ideals by 'shaving' the word from the left to the right. These ideals are *not* prime, but modulo each one of them the ideal generated by $x$ is not nilpotent, whereas modulo the union, $xRx=0$.

*An attempt in finite GKdim: Define 'Agata numbers' - a sequence $\{a_1,a_2,...\}$ with incredible growth. Let $R$ be the quotient of the free algebra obtained by setting $x^2=0$ and also $xy^mx=0$ for all $m$ not an Agata number. What about GKdimR?
Let $I_n$ be the ideal of $R$ generated by all $xy^{a_i}x$ for $1 \leq i \leq n$. They are prime but their union is not even semiprime.

\subsection{ACC(primes)}

Does an affine \PP\ ring necessarily satisfy ACC(primes)?

\subsection{Affine algebras}

*Is it true that any PI ring finitely generated as a ring over some commutative ring is \PP? If not, there exists a nice, quite surprising example of a ring which is a finite module both over a commutative ring and over a non-CP ring. This is obtained by a combination of a theorem of Schelter and Noether normalization lemma (which does not exist yet). This also means that if we prove that it is possible to raise and lower CP between finite modules then we prove that PI f.g. over comm. ring is CP.

\subsection{Crossed products}

Use properties of normalizing extensions.

\subsection{Generic flatness}

?

\section{Further ideas for another article}

\subsection{Chains of rings}

It is also possible to discuss chains of rings. It is well known that the union of a chain of prime (semiprime, simple) rings is prime (semiprime, simple respectively). The case of primitive rings is slightly different, however.

Consider the Leavitt path algebras $L(E_{\alpha})$ of the graphs $E_{\alpha}$, where $E_{\alpha}$, for $\alpha$ an ordinal, is the graph whose vertices are the elements of $\alpha$, with an edge from $\beta$ to $\gamma$ iff $\beta<\gamma$. As in \cite{Leavitt}, for all countable ordinals $\alpha$, the algebra $L(E_\alpha)$ is primitive but the algebra $E_{\aleph_1}$ is not primitive (as it does not satisfy the countable separation property), resulting in a chain of primitive rings with non-primitive union. However, the chain is uncountable and the rings are not finitely generated.

\begin{ques}
Is there a countable chain of affine primitive rings with non-primitive union?
\end{ques}

\subsection{A general theory for monomials}

Let $F$ be a field, $S$ an algebra over $F$, and $R = F[x] * S$ the amalgamated product (which is the co-product in the category of algebras).

The notion of a monomial is clearly defined. It is easy to see that if $I$ is generated by monomials then if $f \in I$, every monomial of $f$ is in $I$.

There is also the notion of a leading monomial, but we might need some assumption on $S$ for this to satisfy the conditions below. We denote by $\overline{g}$ the leading monomial. Assume for any monomial $w$ and elements $f,g \in R$, we have that $\overline{fwg} = \overline{f}w \overline{g}$. We claim that if $P$ is `monomially prime' (if $uRv \sub P$ then $u\in P$ or $v \in P$, for any two monomials $u,v$), then $P$ is prime. Indeed, assume $f R g \sub P$. Then for every monomial $w$ we have that $fwg \in P$ so $\overline{f}w\overline{g} = \overline{fwg} \in P$, which proves $\overline{f}R\overline{g} \sub P$, so by assumption $\overline{f} \in P$ or $\overline{g} \in P$, and the claim on $f,g$ follows by induction on the number of monomials.

((In order to produce the required technique, we need to define a leading monomial in a way that
$\overline{fwg} = \overline{f}w \overline{g}$.))

\fi 

\end{document}